\newtheorem{thm}{Theorem}
\newtheorem{cor}{Corollary}
\newtheorem{lem}{Lemma}
\theoremstyle{definition}
\theoremstyle{remark}
\numberwithin{equation}{section}
\begin{document}

\title[New Congruences on Multiple Harmonic Sums and Bernoulli Numbers]
{New Congruences on Multiple Harmonic \\Sums and Bernoulli Numbers}
\author{ LIUQUAN WANG }

\address{Department of Mathematics, National University of Singapore, Singapore, 119076, Singapore}
\email{wangliuquan@u.nus.edu; mathlqwang@163.com}

\subjclass[2010]{Primary 11A07, 11A41.}

\keywords{Congruences, Bernoulli numbers, multiple harmonic sums}

\dedicatory{}
\date{Oct 18, 2015}

\begin{abstract}
Let ${\mathcal{P}_{n}}$ denote the set of positive integers which are prime to  $n$. Let $B_{n}$ be the $n$-th Bernoulli number. For any prime $p \ge 11$ and integer $r\ge 2$, we prove that
\begin{displaymath}
\sum\limits_{\begin{smallmatrix}
 {{l}_{1}}+{{l}_{2}}+\cdots +{{l}_{6}}={{p}^{r}} \\
 {{l}_{1}},\cdots ,{{l}_{6}}\in {\mathcal{P}_{p}}
\end{smallmatrix}}{\frac{1}{{{l}_{1}}{{l}_{2}}{{l}_{3}}{{l}_{4}}{{l}_{5}}{l}_{6}}}\equiv  - \frac{{5!}}{18}p^{r-1}B_{p-3}^{2} \pmod{{{p}^{r}}}.
\end{displaymath}
This extends a family of  curious congruences. We also obtain other interesting congruences involving multiple harmonic sums and Bernoulli numbers.
\end{abstract}

\maketitle

\section{Introduction}
It is well known that the $n$-th Bernoulli number $B_{n}$ is defined by the series
\[\frac{x}{{{e}^{x}}-1}=\sum\limits_{n=0}^{\infty }{\frac{{{B}_{n}}}{n!}{{x}^{n}}}.\]
For example, $B_{0}=1, B_{1}=-\frac{1}{2}, B_{2}=\frac{1}{6}$ and $B_{2n+1}=0$ for all $n \ge 1$. There are many fascinating congruences related to some special sums and Bernoulli numbers. For instance, using partial
sum of multiple zeta series, Zhao \cite{Zhao1} proved that for any prime $p \ge 3$,
\begin{equation}\label{zhao1}
\sum\limits_{\begin{smallmatrix}
 i+j+k=p \\
 i,j,k>0
\end{smallmatrix}}^{{}}{\frac{1}{ijk}\equiv -2{{B}_{p-3}} \pmod {p}.}
\end{equation}
By using some combinatorial identities, Ji \cite{Ji} gave a simple proof of this congruence. Zhou and Cai \cite{zhouxia} gave a generalization by establishing a congruence involving arbitrary number of variables. Namely, they showed that
\begin{equation}\label{zhou}
\sum\limits_{\begin{smallmatrix}
 {{l}_{1}}+\cdots +{{l}_{n}}=p, \\
      {{l}_{1}},\cdots ,{{l}_{n}}>0
\end{smallmatrix}}{\frac{1}{{{l}_{1}}{{l}_{2}}\cdots {{l}_{n}}}}\equiv \left\{ \begin{array}{ll}
   -(n-1)!{{B}_{p-n}} \pmod{p} & \textrm{if $2 \nmid n$;} \\
 -\frac{n}{2(n+1)}n!{{B}_{p-n-1}}p  \pmod{{{p}^{2}}} & \textrm{if $2 | n$,}
\end{array} \right.
\end{equation}
where $p \ge 5$ is a prime and $n \le p-2$ is a positive integer.

In 2014, by replacing prime $p$ to prime power, the author and Cai \cite{WangCai} gave a new generalization of (\ref{zhao1}).
Let ${\mathcal{P}_{n}}$ denote the set of positive integers which are prime to  $n$. We proved that for any prime $p \ge 3$,
\begin{equation}\label{Wang}
\sum\limits_{\begin{smallmatrix}
 i+j+k={{p}^{r }} \\
 i,j,k\in {\mathcal{P}_{p}}
\end{smallmatrix}}^{{}}{\frac{1}{ijk}\equiv -2{{p}^{r-1}}{{B}_{p-3}} \pmod{p^r}}.
\end{equation}

In view of (\ref{zhou}), it would be attractive to find some congruences similar to (\ref{Wang}) by increasing the number of variables. For convenience, we define for $1 \le m < n$ that
\[S_{n}^{(m)}({{p}^{r}})=\sum\limits_{\begin{smallmatrix}
 {{l}_{1}}+\cdots +{{l}_{n}}=m{{p}^{r}} \\
 l_{i} < p^{r}, l_{i} \in {\mathcal{P}_{p}}, 1\le i \le n
\end{smallmatrix}}{\frac{1}{{{l}_{1}}{{l}_{2}}\cdots {{l}_{n}}}}.\]
In particular, we also use $S_{n}(p^r)$ to represent $S_{n}^{(1)}(p^r)$ for convention.

Zhao \cite{Zhaoarxiv} found an analogous congruence for $S_{4}(p^r)$. He proved that for any prime $p\ge 5$ and integer $r\ge 2$,
\begin{equation}\label{4variable}
\sum\limits_{\begin{smallmatrix}
 {{l}_{1}}+\cdots +{{l}_{4}}={{p}^{r}} \\
 {{l}_{1}},\cdots ,{{l}_{4}}\in {\mathcal{P}_{p}}
\end{smallmatrix}}{\frac{1}{{{l}_{1}}{{l}_{2}}{{l}_{3}}{{l}_{4}}}}\equiv -\frac{4!}{5}{{p}^{r}}{{B}_{p-5}} \pmod{{p}^{r+1}}.
\end{equation}

Recently, the author \cite{Wang2015} solved the case when there are five variables. We proved that for any prime $p > 5$ and integer $r\ge 2$,
\begin{equation}\label{5variable}
\sum\limits_{\begin{smallmatrix}
 {{l}_{1}}+\cdots +{{l}_{5}}={{p}^{r}} \\
 {{l}_{1}},\cdots ,{{l}_{5}}\in {\mathcal{P}_{p}}
\end{smallmatrix}}{\frac{1}{{{l}_{1}}{{l}_{2}}{{l}_{3}}{{l}_{4}}{{l}_{5}}}}\equiv -\frac{5!}{6}{{p}^{r-1}}{{B}_{p-5}} \pmod{{{p}^{r}}}.
\end{equation}
For other related works, see \cite{CaiShenJia}--\cite{WangJcomb} and \cite{Xia}.

Following their steps, the goal of this paper is to find some analogous result for $S_{6}(p^r)$. Firstly, we establish some congruences analogues to (\ref{zhou}). These congruences are of independent interests themselves  and will also be applied to give the modulo $p^r$ determination of $S_{6}(p^r)$.
\begin{thm}\label{Rn2}
Let $n$ be an even integer and $p>n+2$ be a prime. Then
\begin{displaymath}
\sum\limits_{\begin{smallmatrix}l_1+l_2+\cdots +l_n=2p \\ l_1,l_2,\cdots,l_n \in \mathcal{P}_{p} \end{smallmatrix}}{\frac{1}{l_1l_2\cdots l_n}} \equiv \frac{n!}{2}\sum\limits_{\begin{smallmatrix} a=2 \\ a \,\mathrm{odd}\end{smallmatrix}}^{n-2}\frac{B_{p-a}B_{p-n+a}}{a(n-a)} \pmod{p}.
\end{displaymath}
\end{thm}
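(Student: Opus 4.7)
My plan is first to reduce the sum over $\mathcal{P}_p$ to a sum with all $l_i \in \{1, \ldots, p-1\}$. Each $l_i \in \mathcal{P}_p$ with $\sum l_i = 2p$ lies in $[1, 2p-1] \setminus \{p\}$; since two $l_i \ge p+1$ would force $\sum l_i \ge 2p+2$, at most one $l_i$ can exceed $p$. Splitting into Case A (all $l_i \le p-1$) and Case B (one $l_i \ge p+1$), in Case B I substitute $l_1 = p+m$, $m \in [1, p-1]$; since $\frac{1}{p+m} \equiv \frac{1}{m} \pmod p$, Case B reduces modulo $p$ to $nN(n, p)$, where $N(n, p) := \sum_{\sum l_i = p,\,l_i \ge 1} 1/(l_1 \cdots l_n)$. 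For $n$ even, (\ref{zhou}) gives $N(n, p) \equiv 0 \pmod p$, so Case B is negligible. Thus I must prove the congruence for
\[
T_n(2p) := \sum_{\substack{l_1+\cdots+l_n = 2p \\ 1 \le l_i \le p-1}} \frac{1}{l_1 \cdots l_n}.
\]

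\textbf{Paragraph 2 (Convolution identity and key lemma):} The heart of the argument is the functional identity $L(x) + L(y) = L(x+y-xy)$ for $L(x) = -\log(1-x)$. Writing $N(k, m) := [x^m] L(x)^k$, binomial expansion and $[x^p y^p]$-extraction give
\[
\sum_{a=1}^{n-1} \binom{n}{a} N(a, p)\,N(n-a, p) = -N(n, p) + \binom{2p}{p} N(n, 2p) + \sum_{j=1}^{p-1} \binom{p+j}{j}\binom{p}{j}(-1)^{p+j} N(n, p+j).
\]
To reduce this modulo $p$, I plan to use Wolstenholme's $\binom{2p}{p} \equiv 2 \pmod{p^3}$ together with the mod-$p^2$ expansions of $\binom{p}{j}$ and $\binom{p+j}{j}$, and the singular-regular splitting $N(n, p+j) = n N(n-1, j)/p + (\text{$p$-integer})$, which arises because tuples with one $l_i = p$ produce the $1/p$ singularity. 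The key auxiliary identity
\[
\sum_{j=n-1}^{p-1} \frac{N(n-1, j)}{j} \equiv -N(n, p) \equiv 0 \pmod p,
\]
proved by the substitution $l_n = p-j$ and $\frac{1}{p-l_n} \equiv -\frac{1}{l_n} \pmod p$ to recognize the sum as $-N(n, p)$, then makes the correction $\sum_j$ collapse modulo $p$.

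\textbf{Paragraph 3 (Conclusion and obstacle):} After assembling everything the identity reduces to
\[
\sum_{a=2}^{n-2} \binom{n}{a} N(a, p)\,N(n-a, p) \equiv 2\Bigl[N(n, 2p) - \tfrac{n N(n-1, p)}{p}\Bigr] \pmod p,
\]
and applying the Case A/B analysis directly to $N(n, 2p)$ itself gives $T_n(2p) = N(n, 2p) - \tfrac{n N(n-1,p)}{p} - n\sum_{l=p+1}^{2p-n+1}N(n-1, 2p-l)/l$, whose last term again vanishes mod $p$ via $l = 2p-j$ and the auxiliary. Hence $T_n(2p) \equiv \tfrac{1}{2}\sum_{a=2}^{n-2}\binom{n}{a} N(a, p)N(n-a, p) \pmod p$. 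Applying (\ref{zhou}) — namely $N(k, p) \equiv -(k-1)!\,B_{p-k} \pmod p$ for $k$ odd and $\equiv 0 \pmod p$ for $k$ even — kills all even-$a$ terms, and using $\binom{n}{a}(a-1)!(n-a-1)! = n!/(a(n-a))$ the sum collapses to $\frac{n!}{2}\sum_{a \text{ odd}}\frac{B_{p-a}B_{p-n+a}}{a(n-a)}$, as required. The hardest part will be the delicate $p$-adic bookkeeping in Paragraph 2: the quantities $N(n, 2p)$ and $N(n, p+j)$ each have $v_p \ge -1$, while the boundary $a = 1, n-1$ terms of the convolution involve $N(1, p) = 1/p$; verifying that all $1/p$-singularities cancel — as they must, since $T_n(2p)$ is manifestly a $p$-integer — requires the precise mod-$p^2$ expansions of all three binomial coefficients, Wilson's theorem, and the singular-regular decomposition of each $N(n, p+j)$.
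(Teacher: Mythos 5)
Your argument is correct, and it reaches the theorem by a genuinely different road than the paper. The paper symmetrizes via $u_i=l_1+\cdots+l_i$ to rewrite $R_n^{(2)}(p)$ as $\frac{n!}{2p}$ times a nested sum over $u_1<\cdots<u_{n-1}<2p$, then inclusion-excludes on which $u_a$ or which gap $u_{a+1}-u_a$ is divisible by $p$; the surviving piece (the terms with $u_a=p$) factors as $\frac1p H_p(\{1\}^{a-1})H_p(\{1\}^{n-1-a})$ and is evaluated through its Lemma 2 on $U_{bp}$, which is itself an induction resting on Zhou--Cai. You instead extract $[x^py^p]$ from $(L(x)+L(y))^n=L(x+y-xy)^n$ with $L(x)=-\log(1-x)$, which I have checked gives exactly the stated exact identity (the coefficient $[x^py^p](x+y-xy)^m=\binom{m}{p}\binom{p}{2p-m}(-1)^m$ supports only $p\le m\le 2p$), and then you need only Wolstenholme, the elementary congruences $\binom{p}{j}\binom{p+j}{j}(-1)^{p+j}\equiv p/j\pmod{p^2}$, the singular--regular splitting of $N(n,p+j)$, and Zhou--Cai's (\ref{zhou}) as a black box. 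I verified the delicate points you flag: the $\frac{2n}{p}N(n-1,p)$ singularities on the two sides cancel exactly (for $n\ge 3$ a composition of $2p$ or of $p+j$, $j<p$, can contain at most one part divisible by $p$), the $j$-sum collapses to $n\sum_j N(n-1,j)/j\equiv -nN(n,p)\equiv 0$, and the final bookkeeping $\binom{n}{a}(a-1)!(n-a-1)!=n!/(a(n-a))$ with even-$a$ terms killed by parity reproduces the stated right-hand side (and matches the paper's examples for $n=4,6,8$). Structurally both proofs produce the Bernoulli product from a ``splitting at $p$,'' but yours trades the multiple-harmonic-sum machinery for generating functions and binomial congruences, which makes it more self-contained given (\ref{zhou}); the paper's machinery, on the other hand, is what carries over to the $m=3$ case of Theorem \ref{Rn3}. (Two cosmetic remarks: Wilson's theorem is not actually needed, only $\binom{p-1}{j-1}\equiv(-1)^{j-1}$; and the degenerate case $n=2$, where two parts equal to $p$ can occur in $N(2,2p)$, is vacuous for the theorem and already disposed of by your Paragraph 1 alone.)
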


\begin{thm}\label{Rn3}
Let $n$ be an even integer and $p>n+2$ be a prime. Then
\begin{displaymath}
\sum\limits_{\begin{smallmatrix}l_1+l_2+\cdots +l_n=3p \\ l_1,l_2,\cdots,l_n \in \mathcal{P}_{p} \end{smallmatrix}}{\frac{1}{l_1l_2\cdots l_n}} \equiv \frac{n!}{6}\sum\limits_{\begin{smallmatrix} a=2 \\ a \,\mathrm{odd}\end{smallmatrix}}^{n-2}{(2n-a+3)\frac{B_{p-a}B_{p-n+a}}{a(n-a)}} \pmod{p}.
\end{displaymath}
\end{thm}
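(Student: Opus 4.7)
My plan is to decompose the tuples appearing in the sum by their base-$p$ profile, reduce each piece modulo $p$, invoke Theorem~\ref{Rn2} for the ``$2p$'' piece, and evaluate the remaining ``$3p$'' piece via a generating-function argument. Writing $l_i = a_i p + r_i$ with $r_i \in \{1,\dots,p-1\}$ and $a_i \in \{0,1,2\}$, the constraint $\sum l_i = 3p$ together with the hypothesis $p > n+2$ isolates exactly four admissible combinatorial types: (i) all $a_i=0$ with $\sum r_i = 3p$; (ii) exactly one $a_i = 1$ with $\sum r_i = 2p$; (iii) exactly one $a_i = 2$ with $\sum r_i = p$; (iv) exactly two $a_i = 1$ with $\sum r_i = p$.

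Define $U_n(N) := \sum \frac{1}{r_1\cdots r_n}$ over $(r_1,\dots,r_n)\in\{1,\dots,p-1\}^n$ with $\sum r_i = N$. Since $\frac{1}{ap+r}\equiv\frac{1}{r}\pmod p$, the four types contribute $U_n(3p)$, $n\,U_n(2p)$, $n\,S_n(p)$, and $\binom{n}{2} S_n(p)$ modulo $p$ respectively, where $S_n(p)=U_n(p)$ is the ``one-$p$'' sum of~(\ref{zhou}). For even $n$ and $p>n+2$, the Zhou--Cai congruence~(\ref{zhou}) forces $S_n(p)\equiv 0\pmod p$, so
\[
\sum_{\substack{l_1+\cdots+l_n=3p\\ l_i\in\mathcal{P}_p}}\frac{1}{l_1\cdots l_n}\equiv U_n(3p)+n\,U_n(2p)\pmod p.
\]
The analogous two-type decomposition of the sum in Theorem~\ref{Rn2} likewise identifies $U_n(2p)$ with that sum modulo $p$, and Theorem~\ref{Rn2} delivers $U_n(2p)\equiv\frac{n!}{2}\sum_{a\text{ odd}}\frac{B_{p-a}B_{p-n+a}}{a(n-a)}\pmod p$.

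The remaining task is to evaluate $U_n(3p)\pmod p$. I would view $U_n(3p)=[z^{3p}]L(z)^n$ with $L(z)=\sum_{k=1}^{p-1}z^k/k$, and use the formal-series identity $(1-z^p)\,\widetilde{L}(z)\equiv L(z)\pmod p$, where $\widetilde{L}(z)=\sum_{r\ge 1,\,p\nmid r}z^r/r$. This identity follows from $(1-z)^p\equiv 1-z^p\pmod p$ by a formal-logarithm argument. Raising to the $n$-th power and extracting $[z^{3p}]$ expresses $U_n(3p)\pmod p$ as a linear combination of the unrestricted sums $S_m(kp)$ for $m\in\{n-2,n-1,n\}$ and $k\in\{1,2,3\}$. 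Odd-weight pieces are handled by~(\ref{zhou}), and the even-weight pieces require a refined mod-$p^2$ evaluation of $S_m(kp)$ by methods parallel to those used for $S_5$ in~\cite{Wang2015} and $S_4$ in~\cite{WangCai}, producing products of the form $B_{p-a}B_{p-n+a}$.

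The main obstacle will be this last bookkeeping step: the $3p$ case lacks the clean self-reflection symmetry $r_i\leftrightarrow p-r_i$ that controls the $2p$ case, so one must carefully track contributions from all three of $S_m(p)$, $S_m(2p)$, and $S_m(3p)$, and verify a Newton/Stirling-type identity showing that the Bernoulli pairings on $(a,n-a)$ combine into exactly the asymmetric weight $(2n-a+3)/6$ stated in the theorem. Adding this evaluation to the $n\,U_n(2p)$ contribution yields the claimed congruence.
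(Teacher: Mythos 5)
Your opening reduction is sound: writing $l_i=a_ip+r_i$ and using $\frac{1}{a_ip+r_i}\equiv\frac{1}{r_i}\pmod p$ together with $S_n^{(1)}(p)\equiv 0\pmod p$ from (\ref{zhou}) correctly gives $R_n^{(3)}(p)\equiv S_n^{(3)}(p)+nS_n^{(2)}(p)\pmod p$, and identifying $S_n^{(2)}(p)$ with the sum in Theorem~\ref{Rn2} is exactly the paper's Lemma~\ref{RSrelation}/Corollary~\ref{Sn2}. The gap is the evaluation of $U_n(3p)=S_n^{(3)}(p)$, which is where all the content of the theorem lives, and the generating-function step you propose does not supply it. The identity $(1-z^p)\widetilde{L}(z)\equiv L(z)\pmod p$ yields $[z^{3p}]L(z)^n\equiv\sum_{j=0}^{2}(-1)^j\binom{n}{j}R_n^{(3-j)}(p)$, i.e.\ it merely re-derives the inclusion--exclusion between the $l_i<p$ sums and the $l_i\in\mathcal{P}_p$ sums --- circular, since $R_n^{(3)}(p)$ is the unknown. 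Passing further to $\widetilde{L}(z)=-\log(1-z)+\frac{1}{p}\log(1-z^p)$ trades this for ``unrestricted'' sums $\sum_{l_1+\cdots+l_m=kp}\frac{1}{l_1\cdots l_m}$ with $k=2,3$, weighted by powers of $\frac{1}{p}$; these sums are not $p$-integral, are not covered by (\ref{zhou}) (which is only the case $k=1$), and determining them modulo $p^2$ is equivalent in difficulty to the theorem itself. At no point does your argument produce the products $B_{p-a}B_{p-n+a}$, let alone the asymmetric weight $2n-a+3$; you acknowledge this by deferring to an unverified ``Newton/Stirling-type identity.''

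For comparison, the paper gets these products by a different mechanism: symmetrizing to $R_n^{(3)}(p)=\frac{n!}{3p}\sum_{u_1<\cdots<u_{n-1}<3p}\frac{1}{u_1\cdots u_{n-1}}$ over partial sums $u_i=l_1+\cdots+l_i$ with suitable coprimality conditions, then splitting according to whether some $u_j$ equals $p$ or $2p$. The terms with $u_j=p$ (resp.\ $2p$) contribute $\frac{1}{p}$ times a product of two lower-depth harmonic sums $H_p(\{1\}^{j-1})\cdot H_p(\{1\}^{n-1-j})$ (up to controlled corrections), and Lemma~\ref{U} converts each factor into a single Bernoulli number, producing $B_{p-a}B_{p-n+a}$; the remaining terms are shown to vanish modulo $p^2$. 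Your proposal would need an equivalent of this computation (the paper's estimates (\ref{T1final})--(\ref{SumT4})) to close, so as written it is not a proof.
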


Secondly, utilizing these two congruences, we are able to solve the case of 6 variables. It is worthy mention that by using integer relation detecting tool PSLQ, Zhao \cite{Zhaoarxiv} found that if there exists a constant $c\in \mathbb{Q}$ such that for any $r \ge 2$,
\begin{equation}\label{Zhaoconj}
S_{6}(p^r) \equiv cp^rB_{p-7} \pmod{p^{r+1}},
\end{equation}
then both the numerator and the denominator of $c$ must have at least 60 digits. Our result below shows that the congruence satisfied by $S_{6}(p^r)$ is different from (\ref{Zhaoconj}).
\begin{thm}\label{thm1}
Let $p \ge 11$ be a prime and $r\ge 2$ be an integer. We have
\[\sum\limits_{\begin{smallmatrix}
 {{l}_{1}}+\cdots +{{l}_{6}}={{p}^{r}} \\
 {{l}_{1}},\cdots ,{{l}_{6}}\in {\mathcal{P}_{p}}
\end{smallmatrix}}{\frac{1}{l_1l_2l_3l_4l_5l_6}} \equiv -\frac{5!}{18}{{p}^{r-1}}{{B}_{p-3}^{2}} \pmod{{{p}^{r}}}.\]
\end{thm}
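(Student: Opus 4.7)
My approach is to decompose each admissible $l_i$ in mixed base $p$, isolate the modulo-$p^r$ leading order via Taylor expansion, and apply Theorems \ref{Rn2} and \ref{Rn3} to the resulting inner sums.

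Write each $l_i = a_i p + t_i$ with $a_i \in \{0, 1, \dots, p^{r-1}-1\}$ and $t_i \in \{1, \dots, p-1\}$, a bijection since $l_i \in \mathcal{P}_p$ and $l_i < p^r$. Setting $y := p^{r-1}$, the constraint $\sum l_i = p^r$ becomes $\sum t_i = Mp$ with $M := y - \sum a_i \in \{1,2,3,4,5\}$. Taylor-expanding $1/l_i = (1/t_i)\sum_{k \ge 0}(-a_i p/t_i)^k$ and summing over $\vec a$ first yields
\[
S_6(p^r) = \sum_{M=1}^{5}\sum_{\vec k \in \mathbb{Z}_{\ge 0}^{\,6}}(-p)^{|\vec k|}\,P_{\vec k}(y-M)\,V_{\vec k}^{(M)},
\]
where $P_{\vec k}(N) := \sum_{\vec a \ge 0,\,\sum a_i = N}\prod a_i^{k_i}$ is a polynomial of degree $|\vec k|+5$ in $N$ and $V_{\vec k}^{(M)} := \sum_{\sum t_i = Mp,\, 1 \le t_i \le p-1} \prod t_i^{-(k_i+1)}$. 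Expanding $\prod_i L_{k_i}(x)$ with $L_k(x) = \sum_{a \ge 0}a^k x^a$ and extracting $[x^N]$ as a sum of binomial coefficients, one verifies that $P_{\vec k}(N) = 0$ for every $N \in \{s-1, s-2, \dots, s - |\vec k| - 5\}$, where $s := \#\{i : k_i \ge 1\}$. Since $0 \le s \le |\vec k|$, this range always contains $\{-1,-2,-3,-4,-5\}$, so $P_{\vec k}(-M) = 0$ for every $\vec k$ and every $M \in \{1,\dots,5\}$. Hence $P_{\vec k}(y - M)$ is divisible by $y$, and $(-p)^{|\vec k|}P_{\vec k}(y - M)$ has $p$-adic valuation $\ge |\vec k| + r - 1$, which is $\ge r$ whenever $|\vec k| \ge 1$. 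Only $\vec k = \vec 0$ survives modulo $p^r$. Using $y^2 \equiv 0 \pmod{p^r}$ (since $r \ge 2$), a direct expansion of $\binom{y-M+5}{5}$ gives $\binom{y-M+5}{5} \equiv c_M\, y \pmod{p^r}$ with $(c_1,\dots,c_5) = (\tfrac15, -\tfrac1{20}, \tfrac1{30}, -\tfrac1{20}, \tfrac15)$, and therefore
\[
S_6(p^r) \equiv p^{r-1}\sum_{M=1}^{5} c_M\, T^{(M)} \pmod{p^r}, \quad T^{(M)} := \sum_{\substack{t_1+\cdots+t_6 = Mp\\ 1 \le t_i \le p-1}}\frac{1}{t_1\cdots t_6}.
\]

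Next I compute each $T^{(M)}$ modulo $p$. For $M=1$ the bound $t_i \le p-1$ is automatic, so (\ref{zhou}) gives $T^{(1)} \equiv 0 \pmod p$; the change of variables $t_i \mapsto p - t_i$ (with $(-1)^6 = 1$) extends this to $T^{(5)} \equiv T^{(1)}$ and $T^{(4)} \equiv T^{(2)} \pmod p$. For $M \in \{2,3\}$, Theorems \ref{Rn2} and \ref{Rn3} evaluate the \emph{unrestricted} sums $U^{(M)} := \sum_{\sum l_i = Mp,\, l_i \in \mathcal{P}_p} 1/(l_1\cdots l_6)$; for $n=6$ only $a=3$ contributes in the odd-$a$ sum over $[2, n-2]$, giving $U^{(2)} \equiv 40\, B_{p-3}^2$ and $U^{(3)} \equiv 160\, B_{p-3}^2 \pmod p$. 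To convert $U^{(M)}$ into $T^{(M)}$, I stratify each unrestricted tuple by the number of its entries falling in each interval $(jp, (j+1)p)$ for $j \ge 1$; substituting $l = jp + l'$ and using $1/(jp + l') \equiv 1/l' \pmod p$, one obtains
\[
U^{(2)} \equiv T^{(2)} + 6\, T^{(1)},\qquad U^{(3)} \equiv T^{(3)} + 6\, T^{(2)} + \bigl(\tbinom{6}{2} + 6\bigr) T^{(1)} \pmod p,
\]
whence $T^{(2)} \equiv 40\, B_{p-3}^2$ and $T^{(3)} \equiv 160\, B_{p-3}^2 - 240\, B_{p-3}^2 = -80\, B_{p-3}^2 \pmod p$.

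Assembling the contributions,
\[
S_6(p^r) \equiv p^{r-1} B_{p-3}^2 \Bigl(-\tfrac{1}{20}\cdot 40 + \tfrac{1}{30}\cdot(-80) - \tfrac{1}{20}\cdot 40\Bigr) = -\tfrac{20}{3}\, p^{r-1} B_{p-3}^2 = -\tfrac{5!}{18}\, p^{r-1} B_{p-3}^2 \pmod{p^r},
\]
which is the desired congruence. The principal difficulty is the combinatorial bookkeeping in two places: verifying the zero pattern $P_{\vec k}(-M) = 0$ that discards all higher-order Taylor terms, and executing the size-class inclusion-exclusion that links the unrestricted Theorems \ref{Rn2}--\ref{Rn3} to the restricted sums $T^{(M)}$ with the correct multiplicities $6$, $\binom{6}{2}$, $6$. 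Once these are in hand, the curious constant $-5!/18$ emerges automatically from the weighted combination.
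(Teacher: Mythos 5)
Your route is genuinely different from the paper's. The paper never collapses all the way down to residues modulo $p$ in one step: it writes $l_i=x_ip^r+y_i$ with $0\le x_i<p$, derives a linear recursion expressing $S_6^{(m)}(p^{r+1})$ in terms of $S_6^{(1)}(p^r),\dots,S_6^{(5)}(p^r)$ with coefficients $C_a^{(m)}$ (your $c_M$ are exactly the linear coefficients of the same binomial counts, cf.\ Lemma \ref{Cam}), and then inducts on $r$, carrying all five quantities through the induction. You instead peel off the full quotient $a_i=\lfloor l_i/p\rfloor$ at once and reduce directly to the level-$p$ sums $T^{(M)}=S_6^{(M)}(p)$. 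Your base-case values agree with the paper's ($T^{(1)}\equiv T^{(5)}\equiv 0$, $T^{(2)}\equiv T^{(4)}\equiv 40B_{p-3}^2$, $T^{(3)}\equiv-80B_{p-3}^2$, obtained from Theorems \ref{Rn2} and \ref{Rn3} by the same inclusion--exclusion the paper packages as Lemma \ref{RSrelation} and Corollaries \ref{Sn2}--\ref{Sn3}), and the final arithmetic $-\tfrac1{20}\cdot40+\tfrac1{30}\cdot(-80)-\tfrac1{20}\cdot40=-\tfrac{20}{3}=-\tfrac{5!}{18}$ is correct. What the single-step approach buys is the elimination of the induction; what it costs is that you must control infinitely many Taylor terms simultaneously, and that is where your write-up has a gap.

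Two points in the ``only $\vec k=\vec 0$ survives'' step need repair. First, the asserted zero set $\{s-1,\dots,s-|\vec k|-5\}$ of $P_{\vec k}$ is too large: for $\vec k=(2,0,0,0,0,0)$ one has $\prod_iL_{k_i}(x)=(x+x^2)/(1-x)^8$, so $P_{\vec k}(N)=\binom{N+6}{7}+\binom{N+5}{7}$ and $P_{\vec k}(-6)=-1\ne0$, even though $-6$ lies in your claimed set. The correct common zero set is $\{-5,\dots,s-1\}$, the intersection of the sets $\{j-1,\dots,j-|\vec k|-5\}$ over the exponents $j\in[s,|\vec k|]$ occurring in the numerator; since $s\ge0$ this still contains $\{-5,\dots,-1\}$, so the conclusion $P_{\vec k}(-M)=0$ survives. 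Second, and more seriously, from $P_{\vec k}(-M)=0$ you infer that $P_{\vec k}(y-M)$ is ``divisible by $y$'' and hence has $p$-adic valuation at least $r-1$; but $P_{\vec k}$ has rational coefficients with denominators dividing $(|\vec k|+5)!$, which is not prime to $p$ once $|\vec k|\ge p-5$, so the factorization $P_{\vec k}(X-M)=X\,Q(X)$ only yields $v_p\bigl(P_{\vec k}(y-M)\bigr)\ge r-1-v_p\bigl((|\vec k|+5)!\bigr)$. You therefore need the additional estimate $v_p\bigl((|\vec k|+5)!\bigr)\le|\vec k|-1$, which does hold for $p\ge11$: it is trivial when $|\vec k|+5<p$, and otherwise $v_p\bigl((|\vec k|+5)!\bigr)<(|\vec k|+5)/(p-1)\le(|\vec k|+5)/10\le|\vec k|-1$. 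Alternatively, expand $P_{\vec k}(y-M)$ by Vandermonde as an integer combination of $\binom{y}{i}$ with $i\ge1$ (the $i=0$ term being $P_{\vec k}(-M)=0$) and use $v_p\binom{p^{r-1}}{i}=r-1-v_p(i)$. With either patch the discarding of all terms with $|\vec k|\ge1$ is justified and your proof closes; without it, that step is not valid as written for large $|\vec k|$.
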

As a by-product, we have
\begin{thm}\label{thm2}
Let $p \ge 11$ be a prime and $n$ be a positive integer. Suppose  $p^{r}|n$ but $p^{r+1} \nmid n$ for some positive integer $r$.\\
(i) If $r=1$, then
\begin{displaymath}
\sum\limits_{\begin{smallmatrix}
 {{l}_{1}}+\cdots +{{l}_{6}}=n \\
 {{l}_{1}},\cdots ,{{l}_{6}}\in {\mathcal{P}_{p}}
\end{smallmatrix}}{\frac{1}{l_1l_2l_3l_4l_5l_6}}\equiv \frac{5!}{18} \cdot \Big(\big(\frac{n}{p}\big)^{5}-\big(\frac{n}{p}\big)^3\Big){{B}_{p-3}^{2}} \pmod{p}.
\end{displaymath}
(ii) If $r\ge 2$, then
\begin{displaymath}
\sum\limits_{\begin{smallmatrix}
 {{l}_{1}}+\cdots +{{l}_{6}}=n \\
 {{l}_{1}},\cdots ,{{l}_{6}}\in {\mathcal{P}_{p}}
\end{smallmatrix}}{\frac{1}{l_1l_2l_3l_4l_5l_6}}\equiv -\frac{5!}{18} \cdot \frac{n}{p}{{B}_{p-3}^{2}} \pmod{{{p}^{r}}}.
\end{displaymath}
\end{thm}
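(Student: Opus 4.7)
My plan is to prove both parts via a single base-$p^r$ decomposition of the summation variables. Write $n = mp^r$ with $\gcd(m, p) = 1$. Every 6-tuple $(l_1, \ldots, l_6) \in \mathcal{P}_p^{\,6}$ with $l_1 + \cdots + l_6 = n$ decomposes uniquely as $l_i = q_i p^r + l_i'$ with $q_i \ge 0$ and $l_i' \in \{1, 2, \ldots, p^r - 1\} \cap \mathcal{P}_p$. Since $l_i$ and $l_i'$ are both $p$-units, the identity $\frac{1}{l_i} - \frac{1}{l_i'} = -\frac{q_i p^r}{l_i l_i'}$ yields $\frac{1}{l_1 \cdots l_6} \equiv \frac{1}{l_1' \cdots l_6'} \pmod{p^r}$. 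The constraint $\sum_i l_i = n$ forces $\sum_i l_i' = kp^r$ for some $k \in \{1, 2, 3, 4, 5\}$, and for each such $(l_1', \ldots, l_6')$ there are $\binom{m-k+5}{5}$ choices of nonnegative integers $q_i$ with $\sum_i q_i = m - k$. This will yield
\begin{equation}\label{decomp}
\sum_{\substack{l_1 + \cdots + l_6 = n \\ l_1, \ldots, l_6 \in \mathcal{P}_p}} \frac{1}{l_1 \cdots l_6} \equiv \sum_{k=1}^{5}\binom{m-k+5}{5}\, S_6^{(k)}(p^r) \pmod{p^r}.
\end{equation}
The involution $l_i' \mapsto p^r - l_i'$, combined with $(-1)^6 = 1$, will give the symmetry $S_6^{(k)}(p^r) \equiv S_6^{(6-k)}(p^r) \pmod{p^r}$, so only $S_6^{(1)}$, $S_6^{(2)}$, $S_6^{(3)}$ need be determined; the first is supplied by Theorem \ref{thm1}.

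For part (i) ($r = 1$), I would evaluate \eqref{decomp} at $m = 1, 2, 3$ and use, on the left, the fact that $S_6(p) \equiv 0 \pmod p$ (a special case of \eqref{zhou} for six variables) together with Theorems \ref{Rn2} and \ref{Rn3} (which compute the left-hand sides at $m = 2$ and $m = 3$ modulo $p$). This produces a triangular linear system that pins down $S_6^{(2)}(p)$ and $S_6^{(3)}(p)$ modulo $p$. Substituting these three values into \eqref{decomp} and collecting as a polynomial in $m = n/p$ should deliver the stated right-hand side.

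For part (ii) ($r \ge 2$), the same skeleton applies, but now I would need $S_6^{(2)}(p^r)$ and $S_6^{(3)}(p^r)$ modulo $p^r$ rather than merely modulo $p$---that is, prime-power refinements of Theorems \ref{Rn2} and \ref{Rn3}. This will be the main obstacle; these refinements should arise as intermediate results within the proof of Theorem \ref{thm1}, which is itself a $p^r$-level statement whose argument naturally controls multiple harmonic sums modulo $p^r$. Once they are in hand, writing $S_6^{(k)}(p^r) \equiv c_k p^{r-1} B_{p-3}^2 \pmod{p^r}$ with $c_1 = -5!/18$ (from Theorem \ref{thm1}), matching coefficients on the two sides of \eqref{decomp} against the target $m c_1 p^{r-1} B_{p-3}^2$ (viewed as a polynomial identity in $m$) will force $c_2 = -4c_1$ and $c_3 = 6c_1$; the right-hand side of \eqref{decomp} then collapses to $m \cdot S_6(p^r) \equiv -\frac{5!}{18}\cdot\frac{n}{p}\cdot B_{p-3}^2 \pmod{p^r}$, as claimed.
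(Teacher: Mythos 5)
Your proposal follows essentially the same route as the paper's own proof: the base-$p^r$ decomposition $l_i=x_ip^r+y_i$ yielding $\sum_{a=1}^{5}\binom{m-a+5}{5}S_6^{(a)}(p^r)$, the reflection symmetry $S_6^{(k)}(p^r)\equiv S_6^{(6-k)}(p^r)$, the determination of $S_6^{(2)}(p)$ and $S_6^{(3)}(p)$ from Theorems \ref{Rn2} and \ref{Rn3} (your ``triangular system'' is exactly the inclusion--exclusion of Lemma \ref{RSrelation}), and, for $r\ge 2$, the congruences $S_6^{(2)}(p^r)\equiv -4S_6^{(1)}(p^r)$ and $S_6^{(3)}(p^r)\equiv 6S_6^{(1)}(p^r)$, which are indeed available as the intermediate induction hypotheses inside the proof of Theorem \ref{thm1} rather than as separate refinements of Theorems \ref{Rn2} and \ref{Rn3}. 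One caveat: carrying out part (i) gives $40\binom{m+1}{3}B_{p-3}^2=\frac{5!}{18}\bigl(m^3-m\bigr)B_{p-3}^2$ with $m=n/p$ (as the paper's own computation also shows), so the exponents $5$ and $3$ in the printed statement of part (i) appear to be a misprint for $3$ and $1$; your method is not at fault, but you should not expect it to ``deliver the stated right-hand side'' literally.
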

In particular, if $n=p^r$ ($r\ge 2$), then part (ii) of Theorem \ref{thm2} becomes Theorem \ref{thm1}.

The paper is organized as follows. In Sec. 2 we collect some ingredients which will be used to prove the theorems above. In Sec. 3 we will prove Theorems \ref{Rn2} and \ref{Rn3} by using some properties of the $p$-restricted multiple harmonic sums defined by
\begin{displaymath}
H_{N}(\alpha_1,\alpha_2,\cdots, \alpha_n) = \sum\limits_{\begin{smallmatrix}0<k_1<k_2<\cdots <k_n<N \\k_1,k_2,\cdots ,k_n \in \mathcal{P}_{p}\end{smallmatrix}}{\frac{1}{k_{1}^{\alpha_1}k_2^{\alpha_2}\cdots k_n^{\alpha_{n}}}}.
\end{displaymath}
From these two theorems, we can determine $S_{6}^{(k)}(p)$ for $1 \le k \le 5$ modulo $p$. Then in Sec. 4 we establish the relation between $S_{6}^{(k)}(p^{r+1})$ and $S_{6}^{(k)}(p^r)$, from which Theorem \ref{thm1} follows by induction on $r$. Finally, in Sec. 5 we give a rough predication of the general congruences about $R_{n}(p)$ and $S_{n}(p^r)$ for any positive integer $n$. We also point out the difficulties to find the general congruences using the current method.

\section{Preliminaries}
In this section, we introduce some notations and lemmas which will be fundamental to our proofs.

We define for $m\ge 1$ that
\begin{displaymath}
R_n^{(m)}({p^r}) = \sum\limits_{\begin{smallmatrix}{l_1} + {l_2} + \cdots + {l_n} = m{p^r}\\
{l_i} \in {\mathcal{P}_p}, 1 \le i\le n\end{smallmatrix}} {\frac{1}{{{l_1}{l_2}\cdots {l_n}}}}.
\end{displaymath}
The relation between $R_{n}^{(m)}(p)$ and $S_{n}^{(m)}(p)$ is given by the following lemma.
\begin{lem}\label{RSrelation}
For any integers $m$ and $n$, we have
\[S_{n}^{(m)}(p) \equiv \sum\limits_{k=0}^{m-1}{n \choose k}(-1)^kR_{n}^{(m-k)}(p) \pmod{p}.\]
\end{lem}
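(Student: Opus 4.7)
The plan is to apply inclusion-exclusion on the upper-bound conditions $l_i<p$ that distinguish $S_n^{(m)}(p)$ from $R_n^{(m)}(p)$. Since $l_i\in\mathcal{P}_p$ rules out $l_i=p$, the failure of $l_i<p$ is exactly the event $l_i\geq p+1$. Applying inclusion-exclusion on these $n$ events yields
\[
S_n^{(m)}(p)=\sum_{T\subseteq\{1,\ldots,n\}}(-1)^{|T|}\sum_{\substack{l_1+\cdots+l_n=mp\\ l_j\in\mathcal{P}_p\text{ for all }j\\ l_i\geq p+1\text{ for }i\in T}}\frac{1}{l_1\cdots l_n},
\]
and permutation symmetry in $(l_1,\ldots,l_n)$ reduces the outer sum to one over $k=|T|$ with multiplicity $\binom{n}{k}$.

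Next, for fixed $k$, I would substitute $l_i=p+l_i'$ for each $i\in T$. Because $p\mid l_i$ iff $p\mid l_i'$, this gives a bijection between tuples with $l_i\geq p+1$, $l_i\in\mathcal{P}_p$ and tuples with $l_i'\geq 1$, $l_i'\in\mathcal{P}_p$, transforming the total-sum constraint into $l_1'+\cdots+l_k'+l_{k+1}+\cdots+l_n=(m-k)p$. The elementary congruence
\[
\frac{1}{p+l_i'}\equiv\frac{1}{l_i'}\pmod{p}\qquad(l_i'\in\mathcal{P}_p)
\]
then turns the reindexed inner sum into $R_n^{(m-k)}(p)$ modulo $p$ whenever $m-k\geq 1$; for $k\geq m$ the inner sum is empty (hence zero), since $n$ positive integers cannot sum to a nonpositive multiple of $p$. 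Assembling these contributions gives the stated congruence.

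The argument is essentially bookkeeping. The only conceptual point requiring care is verifying that the substitution $l_i=p+l_i'$ lands exactly in the tuples indexing $R_n^{(m-k)}(p)$ with no residual constraint---in particular $l_i'$ may be arbitrarily large, not confined to a window $(0,p)$---which is guaranteed by the fact that coprimality to $p$ is preserved under translation by $p$. I do not anticipate any genuine obstacle beyond this verification.
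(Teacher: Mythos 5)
Your proposal is correct and is essentially the paper's own argument: inclusion--exclusion on the violated upper bounds $l_i<p$, symmetry to collapse the subsets of size $k$ into a factor $\binom{n}{k}$, the shift $l_i\mapsto l_i+p$, and the reduction $\frac{1}{l_i+p}\equiv\frac{1}{l_i}\pmod{p}$, with the terms $k\ge m$ vanishing because the corresponding index sets are empty. No substantive difference from the paper's proof.
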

\begin{proof}
By Inclusion-Exclusion Principle, we have
\begin{displaymath}
\begin{split}
S_{n}^{(m)}(p)&=\sum\limits_{\begin{smallmatrix}l_1+l_2+\cdots +l_n=mp \\l_1,l_2,\cdots, l_n\in \mathcal{P}_{p} \end{smallmatrix}}{\frac{1}{l_1l_2\cdots l_{n}}}+\sum\limits_{k=1}^{m-1}(-1)^{k}\sum\limits_{1\le a_1<\cdots <a_{k}\le n}\sum\limits_{\begin{smallmatrix}l_1+l_2+\cdots +l_n=mp \\l_1,l_2,\cdots, l_n\in \mathcal{P}_{p} \\l_{a_1},\cdots ,l_{a_{k}}>p \end{smallmatrix}}\frac{1}{l_1l_2\cdots l_{n}}\\
&= \sum\limits_{k=0}^{m-1}(-1)^{k}{n \choose k}\sum\limits_{\begin{smallmatrix}l_1+l_2+\cdots +l_n=(m-k)p \\l_1,l_2,\cdots, l_n\in \mathcal{P}_{p}  \end{smallmatrix}}\frac{1}{(l_1+p)(l_2+p)\cdots (l_{k}+p)l_{k+1}\cdots l_{n}}\\
&\equiv \sum\limits_{k=0}^{m-1}{n \choose k}(-1)^k\sum\limits_{\begin{smallmatrix}l_1+l_2+\cdots +l_n=(m-k)p \\l_1,l_2,\cdots, l_n\in \mathcal{P}_{p}  \end{smallmatrix}}\frac{1}{l_1l_2\cdots l_{n}} \pmod{p}.
\end{split}
\end{displaymath}
This completes our proof.
\end{proof}

We also define
\begin{displaymath}
U_{N}(\alpha_1,\alpha_2,\cdots,\alpha_n)=\sum\limits_{\begin{smallmatrix}0<l_1,\cdots, l_n <N \\ l_i \ne l_j, \forall i \ne j, l_i \in \mathcal{P}_{p}\end{smallmatrix}}\frac{1}{l_1^{\alpha_1}\cdots l_{n}^{\alpha_n}}.
\end{displaymath}
There are some relations between $H_{N}$ and $U_{N}$. For example, let $S_{n}$ denote the symmetric group of $\{1,2, \cdots, n\}$. It is easy to see that
\begin{equation}\label{HU}
\sum\limits_{\sigma \in S_{n}}{H_{N}(\alpha_{\sigma(1)}, \alpha_{\sigma(2)}, \cdots, \alpha_{\sigma(n)})} = U_{N}(\alpha_1,\alpha_2,\cdots,\alpha_n).
\end{equation}

\begin{lem}\label{U}
Let $\alpha_1, \cdots, \alpha_n$ be positive integers, $r=\alpha_1+\cdots+ \alpha_n \le p-3$, where $p$ is a prime. For any positive integer $b$, we have
\begin{displaymath}
U_{bp}(\alpha_1, \cdots, \alpha_n) \equiv \left\{ \begin{array}{ll}
   (-1)^{n}(n-1)!\frac{b^2r(r+1)}{2(r+2)}B_{p-r-2}p^2  \pmod{p^3} & \textrm{if $2 \nmid r$;} \\
 (-1)^{n-1}(n-1)!\frac{br}{r+1}B_{p-r-1}p  \pmod{{{p}^{2}}} & \textrm{if $2 | r$.}
\end{array} \right.
\end{displaymath}
\end{lem}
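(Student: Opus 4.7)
My plan is to express $U_{bp}(\alpha_1,\ldots,\alpha_n)$ via M\"obius inversion on the partition lattice $\Pi_n$ of $\{1,\ldots,n\}$ in terms of single-variable power sums
$$T_k(bp) := \sum_{\substack{1\le l<bp\\ p\nmid l}} \frac{1}{l^k},$$
and then isolate the dominant contribution by a $p$-adic valuation count. Using the classical formula $\mu(\hat{0},\pi)=(-1)^{n-|\pi|}\prod_{j}(|B_j|-1)!$ for the partition-lattice M\"obius function, one obtains
$$U_{bp}(\alpha_1,\ldots,\alpha_n)=\sum_{\pi\in\Pi_n}(-1)^{n-|\pi|}\prod_{j=1}^{|\pi|}(|B_j|-1)!\,T_{\gamma_j}(bp),$$
where $\gamma_j=\sum_{i\in B_j}\alpha_i$ for each block $B_j$ of $\pi$.

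Next I would expand each $T_k(bp)$ in powers of $p$. Writing every admissible $l$ as $l=jp+s$ with $0\le j\le b-1$ and $1\le s\le p-1$, and Taylor-expanding $(jp+s)^{-k}$ through second order in $p$ (and using $\sum_{j=0}^{b-1}j=b(b-1)/2$, $\sum_{j=0}^{b-1}j^2=b(b-1)(2b-1)/6$), one finds
$$T_k(bp)\equiv bT_k(p)-\tfrac{kb(b-1)}{2}pT_{k+1}(p)+\tfrac{k(k+1)b(b-1)(2b-1)}{12}p^2T_{k+2}(p)\pmod{p^3}.$$
Substituting the classical Glaisher-type power-sum congruences
$$T_k(p)\equiv \tfrac{k}{k+1}B_{p-k-1}\,p\pmod{p^2}\ \ (k\text{ even}),\qquad T_k(p)\equiv -\tfrac{k(k+1)}{2(k+2)}B_{p-k-2}\,p^2\pmod{p^3}\ \ (k\text{ odd}),$$
and combining terms (for odd $k$ the $b$- and $b(b-1)$-contributions merge cleanly into a factor $b^2$) yields
$$T_k(bp)\equiv \tfrac{bk}{k+1}B_{p-k-1}\,p\pmod{p^2}\ \ (k\text{ even}),\qquad T_k(bp)\equiv -\tfrac{b^2k(k+1)}{2(k+2)}B_{p-k-2}\,p^2\pmod{p^3}\ \ (k\text{ odd}).$$

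Finally I would run a valuation argument on the M\"obius sum. Since $v_p(T_k(bp))\ge 1$ always and $\ge 2$ whenever $k$ is odd, a partition $\pi$ with $m=|\pi|$ blocks contributes a term of $p$-adic valuation at least $m+\#\{j:\gamma_j\text{ odd}\}$. If $r$ is even (target modulus $p^2$), then every $\pi$ with $m\ge 2$ has valuation $\ge 2$ and therefore vanishes mod $p^2$; if $r$ is odd (target modulus $p^3$), then every $\pi$ with $m\ge 2$ must have at least one odd $\gamma_j$ since $\sum_j\gamma_j=r$ is odd, forcing valuation $\ge m+1\ge 3$. In both cases only the coarsest partition $\hat{1}$ survives, contributing $(-1)^{n-1}(n-1)!\,T_r(bp)$; substituting the displayed expansion for $T_r(bp)$ and noting $(-1)^{n-1}\cdot(-1)=(-1)^n$ in the odd case gives both claimed congruences.

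The main technical obstacle I anticipate is the delicate cancellation in the expansion of $T_k(bp)$ for odd $k$ that produces the factor $b^2$; once that single-variable computation is in hand, the partition-lattice bookkeeping and the valuation count are routine.
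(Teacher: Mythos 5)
Your proposal is correct, and its computational core coincides with the paper's: the expansion of $T_k(bp)=\sum_{l<bp,\,p\nmid l}l^{-k}$ by writing $l=jp+s$, Taylor-expanding to second order, summing over $j$, and feeding in the Glaisher congruences for $T_k(p)$ is exactly the paper's single-variable step (including the cancellation that merges the $b$ and $b(b-1)$ contributions into $b^2$ in the odd case, which you correctly identify as the only delicate point and which does work out). Where you diverge is the multi-variable reduction. The paper proceeds by induction on $n$ via the stuffle-type recursion $U_{bp}(\alpha_1,\ldots,\alpha_n)=U_{bp}(\alpha_1,\ldots,\alpha_{n-1})\cdot T_{\alpha_n}(bp)-\sum_{i}U_{bp}(\ldots,\alpha_i+\alpha_n,\ldots)$, kills the product term by a valuation estimate, and picks up the factor $(n-1)!$ one step at a time from the $n-1$ merged terms. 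You instead write the closed-form partition-lattice M\"obius expansion $U_{bp}=\sum_{\pi}\mu(\hat0,\pi)\prod_j T_{\gamma_j}(bp)$ and dispose of every partition with two or more blocks in a single valuation count ($v_p\ge m$ with an extra $+1$ for each odd block sum, which suffices for the modulus $p^2$ when $r$ is even and for $p^3$ when $r$ is odd, since an odd $r$ forces an odd block). The two arguments are logically equivalent --- your M\"obius sum is the paper's induction fully unrolled --- but yours makes transparent at a glance why only the fully merged term $(-1)^{n-1}(n-1)!\,T_r(bp)$ survives and where the sign and the $(n-1)!$ come from, at the price of quoting the formula for $\mu$ on $\Pi_n$; the paper's induction is more elementary and self-contained. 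The only hypotheses you should note explicitly are that every block sum satisfies $\gamma_j\le r\le p-3$, so the quoted power-sum congruences (and the nonnegativity of $v_p(B_{p-\gamma_j-1})$, $v_p(B_{p-\gamma_j-2})$ via von Staudt--Clausen) apply to each factor; this is automatic from the hypothesis on $r$ and does not affect the argument.
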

\begin{proof}
When $b=1$, this has already been proved, see \cite[Lemma 3]{zhouxia}. In particular, if $b=n=1$, we obtain
\begin{equation}\label{Uspecial}
\sum\limits_{1 \le l < p}{\frac{1}{l^{\alpha}}}=U_{p}(\alpha) \equiv   \left\{ \begin{array}{ll}
   -\frac{\alpha(\alpha+1)}{2(\alpha+2)}B_{p-\alpha-2}p^2  \pmod{p^3} & \textrm{if $2 \nmid \alpha$;} \\
 \frac{\alpha}{\alpha+1}B_{p-\alpha-1}p  \pmod{{{p}^{2}}} & \textrm{if $2 | \alpha$.}
\end{array} \right.
\end{equation}
For all $k\ge 1$, we have
\begin{displaymath}
\begin{split}
\sum\limits_{kp<l<(k+1)p}{\frac{1}{l^{\alpha}}}&=\sum\limits_{l=1}^{p-1}{\frac{1}{(l+kp)^\alpha}} \\
&\equiv \sum\limits_{l=1}^{p-1}{\Bigg(1-\frac{\alpha kp}{l}+\frac{\alpha(\alpha+1)}{2l^2}k^2p^2\Bigg)\frac{1}{l^\alpha}}\\
&\equiv U_{p}(\alpha)-\alpha kpU_{p}(\alpha+1) +\frac{\alpha(\alpha+1)}{2l^2}k^2p^2U_{p}(\alpha+2) \pmod{p^3}.
\end{split}
\end{displaymath}
By (\ref{Uspecial}) we see that
\begin{displaymath}
\sum\limits_{kp < l< (k+1)p}{\frac{1}{l^{\alpha}}} \equiv   \left\{ \begin{array}{ll}
   -\frac{\alpha(\alpha+1)}{(\alpha+2)}\Big(\frac{1}{2}+k\Big)B_{p-\alpha-2}p^2  \pmod{p^3} & \textrm{if $2 \nmid \alpha$;} \\
 \frac{\alpha}{\alpha+1}B_{p-\alpha-1}p  \pmod{{{p}^{2}}} & \textrm{if $2 | \alpha$.}
\end{array} \right.
\end{displaymath}
Hence we have
\begin{displaymath}
\sum\limits_{1 \le l < bp, p\nmid l}{\frac{1}{l^{\alpha}}} \equiv   \left\{ \begin{array}{ll}
   -\frac{b^{2}\alpha(\alpha+1)}{2(\alpha+2)}B_{p-\alpha-2}p^2  \pmod{p^3} & \textrm{if $2 \nmid \alpha$;} \\
 \frac{b\alpha}{\alpha+1}B_{p-\alpha-1}p  \pmod{{{p}^{2}}} & \textrm{if $2 | \alpha$.}
\end{array} \right.
\end{displaymath}
This proves the lemma for $n=1$. Now assume the lemma is true when the number of variables is less than $n$. We have
\begin{displaymath}
\begin{split}
U_{bp}(\alpha_1,\cdots, \alpha_n)&=\sum\limits_{\begin{smallmatrix} 1\le l_1, \cdots, l_{n-1} <bp \\ l_i \ne l_j, l_i \in \mathcal{P}_{p}\end{smallmatrix}}{\frac{1}{l_1^{\alpha_1}\cdots l_{n-1}^{\alpha_{n-1}}}\Bigg(\sum\limits_{1\le l_{n}< bp, l_n \in \mathcal{P}_{p}}{\frac{1}{l_n^{\alpha_n}}}-\sum\limits_{i=1}^{n-1}{\frac{1}{l_i^{\alpha_n}}}\Bigg)} \\
&\equiv U_{bp}(\alpha_1, \cdots, \alpha_{n-1})\Bigg(\sum\limits_{1\le l_{n}< bp, l_n \in \mathcal{P}_{p}}{\frac{1}{l_n^{\alpha_n}}}\Bigg) \\
&\quad -\sum\limits_{i=1}^{n-1}{U_{bp}(\alpha_1, \cdots, \alpha_{i-1}, \alpha_{i}+\alpha_{n}, \alpha_{i+1}, \cdots, \alpha_{n-1})}.
\end{split}
\end{displaymath}
From the assumption above, we have
\begin{displaymath}
 U_{bp}(\alpha_1, \cdots, \alpha_{n-1})\Bigg(\sum\limits_{1\le l_{n}< bp, l_n \in \mathcal{P}_{p}}{\frac{1}{l_n^{\alpha_n}}}\Bigg) \equiv \left\{\begin{array}{ll}
 0 \pmod{p^3}, &\textrm{if $2 \nmid r$;} \\
0 \pmod{p^2}, &\textrm{if $2|r$.}
\end{array} \right.
\end{displaymath}
If $r$ is odd, then
\begin{displaymath}
\begin{split}
U_{bp}(\alpha_1,\cdots, \alpha_n)& \equiv -(n-1)U_{bp}(\alpha_1, \cdots, \alpha_{n-2}, \alpha_{n-1}+\alpha_{n})\\
&\equiv -(n-1)(-1)^{n-1}(n-2)!\frac{b^2r(r+1)}{2(r+2)}p^2B_{p-r-2} \\
&\equiv (-1)^n(n-1)!\frac{b^2r(r+1)}{2(r+2)}p^2B_{p-r-2} \pmod{p^3}.
\end{split}
\end{displaymath}
If $r$ is even, similarly we can deduce
\begin{displaymath}
U_{bp}(\alpha_1,\cdots, \alpha_n) \equiv (-1)^{n-1}(n-1)!\frac{br}{r+1}pB_{p-r-1} \pmod{p^2}.
\end{displaymath}
The proof of Lemma \ref{U} is complete by induction on $n$.
\end{proof}

For fixed prime $p$, let $C^{(m)}_{a}$ ($1 \le a, m \le 5$) denote the number of integer solutions $(x_1,\dots,x_6)$ of the equation
$$x_1 + \dots + x_6 =mp-a, \quad 0 \le x_i < p, \quad \forall i=1,\cdots ,6.$$
We need the following facts.
\begin{lem}\label{Cam}
For any prime $p$, we have \\
$(i)$ $C_{a}^{(m)} \equiv 0 \pmod{p}$;\\
$(ii)$ $C_{1}^{(1)}+C_{5}^{(1)} \equiv \frac{2}{5}p \pmod{p^2}$, $C_{2}^{(1)} +C_{4}^{(1)}\equiv -\frac{1}{10}p \pmod{p^2}$, $C_{3}^{(1)} \equiv \frac{1}{30}p \pmod{p^2}$; \\
$(iii)$ $C_{1}^{(2)}+C_{5}^{(2)} \equiv -\frac{8}{5}p \pmod{p^2}$, $C_{2}^{(2)}+C_{4}^{(2)} \equiv \frac{2}{5}p \pmod{p^2}$, $C_{3}^{(2)}\equiv -\frac{2}{15}p \pmod{p^2};$ \\
$(iv)$ $C_{1}^{(3)}+C_{5}^{(3)} \equiv \frac{12}{5}p \pmod{p^2}$, $C_{2}^{(3)}+C_{4}^{(3)} \equiv -\frac{3}{5}p \pmod{p^2}$, $C_{3}^{(3)}\equiv \frac{1}{5}p \pmod{p^2}$.
\end{lem}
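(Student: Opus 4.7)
The plan is to derive $C_a^{(m)}$ from a generating function identity and then reduce the resulting binomial coefficients modulo $p$ or $p^2$. Since $C_a^{(m)}$ counts sextuples $(x_1,\dots,x_6)$ with $0 \le x_i \le p-1$ summing to $mp-a$, it is the coefficient of $x^{mp-a}$ in
\[
\bigl(1 + x + \cdots + x^{p-1}\bigr)^6 = (1-x^p)^6 (1-x)^{-6}.
\]
Applying the binomial theorem to $(1-x^p)^6$ and using $[x^N](1-x)^{-6} = \binom{N+5}{5}$ gives the closed form
\[
C_a^{(m)} = \sum_{k=0}^{m-1} (-1)^k \binom{6}{k} \binom{(m-k)p - a + 5}{5},
\]
where the sum truncates at $k = m-1$ because $1 \le a \le 5$ forces $(m-k)p - a < 0$ for $k \ge m$. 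All three assertions follow from analyzing this closed form.

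For part (i), the key observation is that for each $j := m-k \ge 1$ and each $a \in \{1,\dots,5\}$, the five consecutive integers $jp - a + 1, \dots, jp - a + 5$ appearing in the numerator of $\binom{jp-a+5}{5}$ contain exactly one multiple of $p$, namely $jp$ itself (corresponding to $i = a$). Since $p > 5$ implies $p \nmid 5! = 120$, we conclude $p \mid \binom{jp - a + 5}{5}$ for every term, hence $p \mid C_a^{(m)}$.

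For parts (ii)--(iv), I would sharpen the previous step to modulo $p^2$ by pulling out the factor $jp$ explicitly and reducing the remaining product modulo $p$:
\[
\binom{jp-a+5}{5} = \frac{jp}{120}\prod_{\substack{1 \le i \le 5 \\ i \ne a}}(jp + i - a) \equiv c(a)\, jp \pmod{p^2},
\qquad c(a) := \frac{1}{120}\prod_{\substack{1 \le i \le 5\\ i \ne a}}(i-a).
\]
A direct computation gives $c(1) = c(5) = \tfrac{1}{5}$, $c(2) = c(4) = -\tfrac{1}{20}$, $c(3) = \tfrac{1}{30}$; the symmetry $c(a) = c(6-a)$ is precisely what pairs $C_1^{(m)}$ with $C_5^{(m)}$ and $C_2^{(m)}$ with $C_4^{(m)}$ in the statement. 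Substituting back yields
\[
C_a^{(m)} \equiv c(a)\, p\, g(m) \pmod{p^2}, \qquad g(m) := \sum_{k=0}^{m-1} (-1)^k \binom{6}{k}(m-k),
\]
and a short calculation produces $g(1) = 1$, $g(2) = -4$, $g(3) = 6$. Combined with the values of $c(a)$, these recover all six congruences in (ii), (iii), (iv) at once.

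I do not anticipate any substantive obstacle: the argument is elementary combinatorics together with reduction modulo $p^2$. The only mild subtlety is ensuring the truncation $k \le m-1$ in the closed form for $C_a^{(m)}$ is handled correctly, since it is precisely this finite range that makes $g(m)$ a fixed small integer rather than an alternating binomial sum needing further simplification.
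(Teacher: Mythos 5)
Your proof is correct and follows essentially the same route as the paper: both reduce to the identical closed form $C_a^{(m)}=\sum_{k=0}^{m-1}(-1)^k\binom{6}{k}\binom{(m-k)p-a+5}{5}$ (you via the generating function $(1-x^p)^6(1-x)^{-6}$, the paper via inclusion--exclusion) and then expand each binomial coefficient modulo $p^2$ by isolating the single factor $jp$; your constants $c(a)$ coincide with the paper's $(-1)^{a-1}(a-1)!(5-a)!/5!$ and your $g(m)$ with its evaluation of $X_a(3)-6X_a(2)+15X_a(1)$ etc. The only caveat, shared with the paper, is that the argument implicitly needs $p>5$ so that $p\nmid 5!$, which is harmless since the lemma is only invoked for $p\ge 11$.
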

\begin{proof}
By Inclusion-Exclusion Principle, we have
\begin{eqnarray*}
{{C}_{a}^{(m)}} &=&  \sum\limits_{k=1}^{m}{(-1)^{k-1}\#\Big\{(x_{1},\cdots,x_{6})\in \mathbb{N}^{6}\mid{{{x}_{1}}+\cdots +{{x}_{6}}=mp-a,\exists x_{i_{1}},\cdots,x_{i_{k-1}}\ge p}\Big\}} \\
 & =& \sum\limits_{k=1}^{m}{(-1)^{k-1}\#\Big\{(x_{1},\cdots,x_{6})\in \mathbb{N}^{6}\left|{{{x}_{1}}+\cdots +{{x}_{6}}=(m-k+1)p-a}\right.\Big\}}\binom{6}{k-1} \\
& =&\sum\limits_{k=1}^{m}{(-1)^{k-1}\binom{(m-k+1)p-a+5}{5}}\binom{6}{k-1}\\
\end{eqnarray*}

Let $X_{a}(u)=\binom{up-a+5}{5}$. It follows that
\begin{equation}\label{Camgen}
C_{a}^{(m)}=\sum\limits_{k=1}^{m}{(-1)^{k-1}X_{a}(m-k+1)\binom{6}{k-1}}
\end{equation}
In particular, we have
\begin{equation}\label{Camexpress}
\begin{split}
C_{a}^{(1)}&=X_{a}(1), \quad C_{a}^{(2)}=X_{a}(2)-6X_{a}(1), \\
C_{a}^{(3)}&=X_{a}(3)-6X_{a}(2)+15X_{a}(1).
\end{split}
\end{equation}

Since $1\le a \le 5$, we have
\begin{equation}\label{Xau}
\begin{split}
X_{a}(u) &=\frac{(up-a+5)(up-a+4)(up-a+3)(up-a+2)(up-a+1)}{5!} \\
& \equiv (-1)^{(a-1)}\frac{(a-1)!(5-a)!}{5!}up \pmod{p^2}.
\end{split}
\end{equation}
Hence from (\ref{Camgen}) we see that $C_{a}^{(m)} \equiv 0$ (mod $p$) and (i) is proved.

Furthermore, from (\ref{Xau}) we deduce that
\begin{equation}\label{Xauadd}
\begin{split}
X_{1}(u)+X_{5}(u)&\equiv \frac{2}{5}up \pmod{p^2}, \quad X_3(u) \equiv \frac{1}{30}up \pmod{p^2} \quad \textrm{and} \\
X_2(u)+X_4(u)&\equiv -\frac{1}{10}up \pmod{p^2}.
\end{split}
\end{equation}
From (\ref{Camexpress}) and (\ref{Xauadd}), (ii)--(iv) follows by simple calculations.
\end{proof}

\section{Proofs of Theorems \ref{Rn2} and \ref{Rn3}}
Let $u_{i}=l_1+\cdots+l_{i}$ for $i=1,2,\cdots n-1$. We have
\begin{eqnarray}\label{start}
R_{n}^{(m)}(p)&=&\frac{1}{mp}\sum\limits_{\begin{smallmatrix}l_1+l_2+\cdots+l_n=mp \\ l_{1},l_{2},\cdots, l_{n}\in \mathcal{P}_{p} \end{smallmatrix}}{\frac{l_1+l_2+\cdots+l_n}{l_1l_2\cdots l_n}}  \nonumber\\
&=&\frac{n}{mp}\sum\limits_{\begin{smallmatrix} u_{n-1}<mp \\ l_1, \cdots,l_{n-1},u_{n-1} \in \mathcal{P}_{p}\end{smallmatrix}}{\frac{1}{l_1l_2\cdots l_{n-1} }} \nonumber\\
&=&\frac{n}{mp}\sum\limits_{\begin{smallmatrix} u_{n-1}<mp \\ l_1, \cdots,l_{n-1},u_{n-1} \in \mathcal{P}_{p}\end{smallmatrix}}{\frac{l_1+l_2+\cdots+l_{n-1}} {l_1l_2\cdots l_{n-1}u_{n-1} }} \nonumber\\
&=&\frac{n(n-1)}{mp}\sum\limits_{\begin{smallmatrix} u_{n-2}<u_{n-1}<mp \\ l_1, \cdots,l_{n-2},u_{n-1}-u_{n-2},u_{n-1} \in \mathcal{P}_{p}\end{smallmatrix}}{\frac{1} {l_1l_2\cdots l_{n-2}u_{n-1} }} =\cdots \nonumber\\
&=&\frac{n!}{mp}\sum\limits_{\begin{smallmatrix} u_1<\cdots <u_{n-1}<mp \\ u_1, u_2-u_1,\cdots,u_{n-1}-u_{n-2},u_{n-1} \in \mathcal{P}_{p}\end{smallmatrix}}{\frac{1} {u_1u_2\cdots u_{n-1} }}.
\end{eqnarray}
\begin{proof}[Proof of Theorem \ref{Rn2}]
Let $m=2$ in (\ref{start}). We have
\begin{eqnarray}\label{sumformula}
R_{n}^{(2)}(p)&=&\frac{n!}{2p}\Bigg(\sum\limits_{\begin{smallmatrix} u_1<\cdots<u_{n-1}<2p\\ u_1,\cdots,u_{n-1}\in \mathcal{P}_{p} \end{smallmatrix}}{\frac{1}{u_1u_2\cdots u_{n-1}}}-\sum\limits_{a=1}^{n-2}\sum\limits_{\begin{smallmatrix} u_1<\cdots<u_{n-1}<2p\\ u_{a+1}-u_{a}=p, u_1,\cdots,u_{n-1}\in \mathcal{P}_{p} \end{smallmatrix}}{\frac{1}{u_1u_2\cdots u_{n-1} }} \nonumber \\
&&+\sum\limits_{a=2}^{n-2}\sum\limits_{\begin{smallmatrix} u_1<\cdots<u_{n-1}<2p\\ u_{a}=p,u_{j}\in \mathcal{P}_{p}, \forall j\ne a \end{smallmatrix}}{\frac{1}{u_1u_2\cdots u_{n-1}}} \Bigg)\\
&=&T_1-T_2+T_3, \nonumber
\end{eqnarray}
here we denote the three terms in the right hand side of (\ref{sumformula}) by $T_{1}$, $T_{2}$ and $T_{3}$ respectively. We will deal with these terms one by one.

For the first term, since $n-1$ is odd, by (\ref{HU}) and Lemma \ref{U} we deduce that
\begin{equation}\label{T1}
T_1=\frac{n!}{2p}H_{2p}(\{1\}^{n-1})=\frac{n}{2p}U_{2p}(\{1\}^{n-1})\equiv 0 \pmod{p}.
\end{equation}

For the second term,  according to (\ref{HU}) and Lemma \ref{U}, we have
\begin{eqnarray*}
&&\sum\limits_{a=1}^{n-2}\sum\limits_{\begin{smallmatrix}u_1<u_1<\cdots <u_{n-1}<2p\\ u_{a+1}-u_{a}=p \end{smallmatrix}}{\frac{1}{u_1u_2\cdots u_{n-1}} }  \quad (\textrm{replace $u_{k+1}$ by $u_{k}+p$ for $k\ge a$} )\\
&=& \sum\limits_{a=1}^{n-2}\sum\limits_{u_1<\cdots <u_{n-2}<p}{\frac{1}{u_1\cdots u_a(u_a+p)(u_{a+1}+p)\cdots (u_{n-2}+p) }}\\
&\equiv &\sum\limits_{a=1}^{n-2}\sum\limits_{u_1<\cdots <u_{n-2}<p}{\frac{1}{u_1\cdots u_{a}^2u_{a+1}\cdots u_{n-2}}\Big(1-\frac{p}{u_a}-\frac{p}{u_{a+1}}-\cdots -\frac{p}{u_{n-2}}\Big)} \\
&\equiv & \sum\limits_{a=1}^{n-2}H_{p}(\{1\}^{a-1},2,\{1\}^{n-2-a})-p\sum\limits_{a=1}^{n-2}H_{p}(\{1\}^{a-1},3,\{1\}^{n-2-a})\\
&& -p\sum\limits_{1\le a <b\le n-2}{H_{p}(\{1\}^{a-1},2,\{1\}^{b-a-1},2,\{1\}^{n-2-b})}\\
&\equiv& \frac{n-2}{(n-2)!}U_{p}(2,\{1\}^{n-3})-p\cdot \frac{n-2}{(n-2)!}U_{p}(3,\{1\}^{n-3})-p\frac{{n-2 \choose 2}}{(n-2)!}U_{p}(2,2,\{1\}^{n-4}) \\
&\equiv &0 \pmod{p^2}.
\end{eqnarray*}
Therefore, we have
\begin{equation}\label{T2}
T_{2}\equiv 0 \pmod{p}.
\end{equation}

Now we calculate the third term. For $2\le a \le n-2$, from (\ref{HU}) and Lemma \ref{U} we obtain that
\begin{displaymath}
\begin{split}
&\sum\limits_{\begin{smallmatrix} u_1<\cdots<u_{n-1}<2p\\ u_{a}=p,u_1,\cdots,u_{n-1}\in \mathcal{P}_{p} \end{smallmatrix}}{\frac{1}{u_1u_2\cdots u_{n-1}}} \quad (\textrm{replace $u_{k}$ by $x_{k-a}+p$ for $k>a$})\\
&=\frac{1}{p}\Bigg(\sum\limits_{u_1<\cdots<u_{a-1}<p}{\frac{1}{u_1u_2\cdots u_{a-1}} } \Bigg)\Bigg(\sum\limits_{x_{1}<\cdots <x_{n-1-a}<p}{\frac{1}{(x_{1}+p)\cdots (x_{n-1-a}+p)}} \Bigg)\\
&\equiv \frac{1}{p}H_{p}(\{1\}^{a-1}) \sum\limits_{x_1<\cdots <x_{n-1-a}<p}{\frac{1}{x_1\cdots x_{n-1-a}}\Big(1-\frac{p}{x_1}-\cdots -\frac{p}{x_{n-1-a}}\Big) } \\
&\equiv \frac{1}{p}H_{p}(\{1\}^{a-1})\Big(H_{p}(\{1\}^{n-1-a})-p\cdot \frac{n-1-a}{(n-1-a)!}U_{p}(2,\{1\}^{n-2-a}) \Big)\\
&\equiv \frac{1}{p}\cdot \frac{U_{p}(\{1\}^{a-1}) }{(a-1)! }\cdot \frac{U_{p}(\{1\}^{n-1-a})}{(n-1-a)!} \\
&\equiv \left\{\begin{array}{ll}
\frac{B_{p-a}B_{p-n+a}}{a(n-a)}p & \textrm{if $a$ is odd}\\
0 & \textrm{if $a$ is even}
\end{array} \right. \pmod{p^2}.
\end{split}
\end{displaymath}
Therefore, we have
\begin{equation}\label{T3}
T_3\equiv \frac{n!}{2p}\sum\limits_{\begin{smallmatrix}a=2\\ a \,\, \textrm{odd} \end{smallmatrix}}^{n-2}{\frac{B_{p-a}B_{p-n+a}}{a(n-a)}p }\\
\equiv \frac{n!}{2}\sum\limits_{\begin{smallmatrix}a=2\\ a \,\,\textrm{odd} \end{smallmatrix}}^{n-2}{\frac{B_{p-a}B_{p-n+a}}{a(n-a)} } \pmod{p}.
\end{equation}
Substituting (\ref{T1})--(\ref{T3}) into (\ref{sumformula}), we complete the proof of Theorem \ref{Rn2}.
\end{proof}
Now we are able to determine $S_{n}^{(2)}(p)$ modulo $p$.
\begin{cor}\label{Sn2}
Let $n$ be an even integer and $p>n+2$ be a prime. We have
\begin{displaymath}
\sum\limits_{\begin{smallmatrix}l_1+l_2+\cdots +l_n=2p\\ l_1,l_2,\cdots ,l_n<p \end{smallmatrix}}\frac{1}{l_1l_2\cdots l_n} \equiv \frac{n!}{2}\sum\limits_{\begin{smallmatrix} a=2 \\ a \,\mathrm{odd}\end{smallmatrix}}^{n-2}\frac{B_{p-a}B_{p-n+a}}{a(n-a)} \pmod{p}.
\end{displaymath}
\end{cor}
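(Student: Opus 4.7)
The plan is to deduce this corollary directly from Theorem \ref{Rn2} via the inclusion-exclusion relation encoded in Lemma \ref{RSrelation}. Observe first that the sum on the left of the corollary is precisely $S_n^{(2)}(p)$ in the paper's notation: the constraint $l_i<p$ automatically forces $l_i\in\mathcal{P}_p$, and $l_1+\cdots+l_n=2p$. So the task reduces to computing $S_n^{(2)}(p)\pmod p$.

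I would apply Lemma \ref{RSrelation} with $m=2$, which yields
\[
S_n^{(2)}(p)\equiv R_n^{(2)}(p)-n\,R_n^{(1)}(p)\pmod{p}.
\]
The first term is handled immediately by Theorem \ref{Rn2}, which provides exactly the expression appearing on the right side of the corollary statement. So the only remaining task is to show that the contribution $n\,R_n^{(1)}(p)$ is negligible modulo $p$.

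For this, note that $R_n^{(1)}(p)=\sum_{l_1+\cdots+l_n=p,\,l_i>0}\frac{1}{l_1\cdots l_n}$ is precisely the sum appearing on the left of Zhou--Cai's congruence (\ref{zhou}) (since $l_i<p$ is automatic when the $l_i$ are positive integers summing to $p$ and $n\ge 2$). Because $n$ is even by hypothesis, (\ref{zhou}) gives
\[
R_n^{(1)}(p)\equiv -\frac{n}{2(n+1)}\,n!\,B_{p-n-1}\,p\pmod{p^2},
\]
which in particular vanishes modulo $p$. Therefore $n\,R_n^{(1)}(p)\equiv 0\pmod p$ and combining with Theorem \ref{Rn2} gives the corollary.

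There is no genuine obstacle here: the real work was already done in Theorem \ref{Rn2}, and Lemma \ref{RSrelation} together with the known evaluation (\ref{zhou}) packages it into the $S_n^{(2)}$ formulation. The only point to verify carefully is the translation between the sets $\{l_i<p\}$ and $\{l_i\in\mathcal{P}_p\}$ in the two sums, which as noted above is immediate from the summation constraint $l_1+\cdots+l_n=2p$ with positive $l_i$.
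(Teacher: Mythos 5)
Your proposal is correct and follows essentially the same route as the paper: identify the left-hand side with $S_n^{(2)}(p)$, apply Lemma \ref{RSrelation} with $m=2$ to get $S_n^{(2)}(p)\equiv R_n^{(2)}(p)-nR_n^{(1)}(p)\pmod p$, kill the $R_n^{(1)}(p)$ term via the even case of (\ref{zhou}), and invoke Theorem \ref{Rn2}. The paper's proof is exactly this, stated slightly more tersely.
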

\begin{proof}
From (\ref{zhou}) we know $R_{n}^{(1)}(p)\equiv S_{n}^{(1)}(p)\equiv 0$ (mod $p$). By Lemma \ref{RSrelation}, we obtain
\[S_{n}^{(2)}(p)\equiv R_{n}^{(2)}(p)-nR_{n}^{(1)}(p) \equiv R_{n}^{(2)}(p) \pmod{p}.\]
The corollary then follows from Theorem \ref{Rn2}.
\end{proof}

\begin{proof}[Proof of Theorem \ref{Rn3}]
Let $m=3$ in (\ref{start}). We have
\begin{equation}\label{Rn3start}
\begin{split}
R_{n}^{(3)}(p)& =\frac{n!}{3p}\Bigg(\sum\limits_{\begin{smallmatrix} u_1<\cdots <u_{n-1}<3p \\ u_1, \cdots, u_{n-1} \in \mathcal{P}_{p} \\ u_2-u_1, \cdots, u_{n-1}-u_{n-2}\in \mathcal{P}_{p}\end{smallmatrix}}+\sum\limits_{i=2}^{n-4}\sum\limits_{j=i+2}^{n-2}\sum\limits_{\begin{smallmatrix}1\le u_1<\cdots <u_{n-1}<3p \\ u_{i}=p,u_{j}=2p, u_{k}\in \mathcal{P}_{p}, \forall k\ne i,j\\u_2-u_1, \cdots, u_{n-1}-u_{n-2} \in \mathcal{P}_{p} \end{smallmatrix}}\\
&\quad +\sum\limits_{j=2}^{n-2}\sum\limits_{\begin{smallmatrix}u_1<\cdots <u_{n-1}<3p, u_{j}=p \\ u_2-u_2, \cdots, u_{n-1}-u_{n-2}\in \mathcal{P}_{p} \\u_{k}\in \mathcal{P}_{p}\,(k \ne j)\end{smallmatrix}}+\sum\limits_{j=2}^{n-2}\sum\limits_{\begin{smallmatrix}u_1<\cdots <u_{n-2}<3p, u_{j}=2p \\ u_2-u_2, \cdots, u_{n-1}-u_{n-2}\in \mathcal{P}_{p} \\u_{k}\in \mathcal{P}_{p}\,(k \ne j)\end{smallmatrix}}{\frac{1}{u_1u_2\cdots u_{n-2}}} \Bigg)\\
& =\frac{n!}{3p}\Big(T_1+T_2+T_3+T_4\Big),
\end{split}
\end{equation}
here we denote the four sums in the bracket by $T_{1}$, $T_{2}$, $T_{3}$ and $T_{4}$, respectively.

For the first sum in (\ref{Rn3start}), we have
\begin{equation}\label{T1start}
\begin{split}
T_{1}=\sum\limits_{\begin{smallmatrix}u_{1}<\cdots <u_{n-1}<3p\\ u_1,\cdots, u_{n-1}\in \mathcal{P}_{p}  \end{smallmatrix}}-\sum\limits_{i=1}^{n-2}\sum\limits_{a=1}^{2}\sum\limits_{\begin{smallmatrix} u_1<\cdots <u_{n-1}<3p \\u_{i+1}-u_{i}=ap \\u_{1}, \cdots, u_{n-1} \in \mathcal{P}_{p} \end{smallmatrix}}+\sum\limits_{1\le i <j \le n-2}\sum\limits_{\begin{smallmatrix} u_1<\cdots <u_{n-1}<3p \\u_{i+1}-u_{i}=u_{j+1}-u_{j}=p \\ u_{1},u_{2},\cdots,u_{n-1}\in \mathcal{P}_{p}\end{smallmatrix}}\frac{1}{u_1u_2\cdots u_{n-1}}
\end{split}
\end{equation}
For $a=1$ or 2, by (\ref{HU}) and Lemma \ref{U} we have
\begin{eqnarray}\label{aterm}
&&\sum\limits_{i=1}^{n-2}\sum\limits_{\begin{smallmatrix}u_1<\cdots <u_{n-1}<3p \\u_{i+1}-u_{i}=ap \\ u_{1},u_{2},\cdots,u_{n-1}\in \mathcal{P}_{p}\end{smallmatrix}}\frac{1}{u_1u_2\cdots u_{n-1}} \quad (\textrm{replace $u_{k}$ by $u_{k-1}+ap$ for $k>i$}) \nonumber\\
&=&\sum\limits_{i=1}^{n-2}\sum\limits_{\begin{smallmatrix}u_1<\cdots  <u_{n-2}<(3-a)p \\u_1,\cdots ,u_{n-2} \in \mathcal{P}_{p}  \end{smallmatrix}}{\frac{1}{u_1\cdots u_{i}(u_{i}+ap)(u_{i+1}+ap)\cdots (u_{n-2}+ap)}}  \nonumber\\
&\equiv& \sum\limits_{i=1}^{n-2}\sum\limits_{\begin{smallmatrix}u_1<\cdots <u_{n-2}<(3-a)p\\ u_1,\cdots, u_{n-2}\in \mathcal{P}_{p}\end{smallmatrix}}\frac{1}{u_1\cdots u_{i-1}u_{i}^{2}u_{i+1}\cdots u_{n-2}}\Big(1-\frac{ap}{u_{i}}-\cdots -\frac{ap}{u_{n-2}}\Big)  \nonumber\\
&\equiv& \frac{n-2}{(n-2)!}U_{(3-a)p}(2,\{1\}^{n-3})-ap\frac{(n-2)U_{(3-a)p}(3,\{1\}^{n-3})}{(n-2)!}  \nonumber\\
&&-ap\frac{{n-2 \choose 2}}{(n-2)!}U_{(3-a)p}(2,2,\{1\}^{n-4})  \nonumber \\
&\equiv& 0 \pmod{p^2}.
\end{eqnarray}

According to $j=i+1$ or not, we split the third sum in (\ref{T1start}) into two parts. For the case $j\ne i+1$, replacing $u_{k}$ by $u_{k-1}+p$ for $i<k\le j$ or $u_{k-2}+2p$ for $k>j$, applying (\ref{HU}) and Lemma \ref{U}, we deduce that
\begin{eqnarray}\label{minisum2}
&&\sum\limits_{\begin{smallmatrix}1 \le i <j \le n-2 \\j \ne i+1  \end{smallmatrix}}\sum\limits_{\begin{smallmatrix}u_1<\cdots <u_{n-1}<3p\\ u_{i+1}-u_{i}=u_{j+1}-u_{j}=p \end{smallmatrix}}\frac{1}{u_1u_2\cdots u_{n-1}}\nonumber \\
&=&\sum\limits_{\begin{smallmatrix}1 \le i <j \le n-2 \\j\ne i+1 \end{smallmatrix} }\sum\limits_{u_1<\cdots <u_{n-3}<p }\frac{1}{u_1\cdots u_{i}(u_{i}+p)\cdots (u_{j-1}+p)(u_{j-1}+2p)\cdots (u_{n-3}+2p)} \nonumber\\
&& (\text{\rm{replace $j$ by $j+1$}}) \nonumber\\
&=&\sum\limits_{1 \le i <j \le n-3 }\sum\limits_{u_1<\cdots <u_{n-3}<p }\frac{1}{u_1\cdots u_{i}(u_{i}+p)\cdots (u_{j}+p)(u_{j}+2p)\cdots (u_{n-3}+2p)}  \nonumber\\
&\equiv& \sum\limits_{1 \le i <j \le n-3 }\sum\limits_{u_1<\cdots <u_{n-3}<p }{\frac{1}{u_1\cdots u_{i-1}u_{i}^{2}u_{i+1}\cdots u_{j-1}u_{j}^{2}u_{j+1}\cdots u_{n-3}}} \nonumber\\
&&{\cdot \Big(1-\frac{p}{u_i}-\cdots-\frac{p}{u_{j}}-\frac{2p}{u_j}-\cdots -\frac{2p}{u_{n-3}} \Big) }  \nonumber\\
&\equiv& \frac{{n-3 \choose 2}}{(n-3)!}U_{p}(2,2,\{1\}^{n-5})-p\sum\limits_{1 \le i <j \le n-3 }{H_{p}(\{1\}^{i-1},3,\{1\}^{j-i-1},2,\{1\}^{n-3-j})}\nonumber\\
&&-p\sum\limits_{1\le i <k<j\le n-3}H_{p}(\{1\}^{i-1},2,\{1\}^{k-i-1},2,\{1\}^{j-k-1},2,\{1\}^{n-j-3}) \nonumber \\
&& -3p\sum\limits_{1\le i <j\le n-3 }H_{p}(\{1\}^{i-1},2,\{1\}^{j-1-i},3,\{1\}^{n-3-j}) \nonumber \\
&& -2p\sum\limits_{1\le i <j<k \le n-3}H_{p}(\{1\}^{i-1},2,\{1\}^{j-1-i},2,\{1\}^{k-j-1},2,\{1\}^{n-3-k})  \nonumber\\
&\equiv& -p\sum\limits_{1\le i<j\le n-3}{ \Big(H_{p}(\{1\}^{i-1},3,\{1\}^{j-i-1},2,\{1\}^{n-3-j})}  \nonumber\\
&&{+3H_{p}(\{1\}^{i-1},2,\{1\}^{j-i-1},3,\{1\}^{n-3-j} )   \Big)  }\pmod{p^2},
\end{eqnarray}
where the last congruence equality follows from the facts that
\begin{displaymath}
\begin{split}
&\quad \sum\limits_{1\le i <j<k \le n-3}H_{p}(\{1\}^{i-1},2,\{1\}^{j-1-i},2,\{1\}^{k-j-1},2,\{1\}^{n-3-k}) \\
&=\frac{{n-3 \choose 3}}{(n-3)!}U_{p}(2,2,\{1\}^{n-5}) \equiv 0 \pmod{p}.
\end{split}
\end{displaymath}

Similarly, for the case $j=i+1$, replacing $u_{k}$ by $u_{k-1}+p$ for $k=i+1$ or $u_{k-2}+2p$ for $k>i+1$, we have
\begin{eqnarray}\label{minisum3}
&&\sum\limits_{\begin{smallmatrix}1 \le i <j \le n-2 \\j = i+1  \end{smallmatrix}}\sum\limits_{\begin{smallmatrix}u_1<\cdots <u_{n-1}<3p\\ u_{i+1}-u_{i}=u_{j+1}-u_{j}=p \end{smallmatrix}}\frac{1}{u_1u_2\cdots u_{n-1}}  \nonumber \\
&=&\sum\limits_{1\le i \le n-3}\sum\limits_{\begin{smallmatrix}u_1<\cdots <u_{n-3}<p \end{smallmatrix}}{\frac{1}{u_{1}\cdots u_{i}(u_{i}+p)(u_{i}+2p)(u_{i+1}+2p)\cdots (u_{n-3}+2p)}}  \nonumber\\
&\equiv& \sum\limits_{1\le i \le n-3}\sum\limits_{\begin{smallmatrix}u_1<\cdots <u_{n-3}<p \end{smallmatrix}}{\frac{1}{u_{1}\cdots u_{i}^{3}u_{i+1}\cdots u_{n-3}}\Big(1-\frac{3p}{u_{i}}-\frac{2p}{u_{i+1}}-\cdots -\frac{2p}{u_{n-3}}  \Big)   }  \nonumber\\
&\equiv &\frac{U_{p}(3,\{1\}^{n-4})}{(n-3)!}-3p\frac{U_{p}(4,\{1\}^{n-4})}{(n-3)!}-2p\sum\limits_{1\le i<j \le n-3}H_{p}(\{1\}^{i-1},3,\{1\}^{j-1-i},2,\{1\}^{n-3-j})  \nonumber\\
&\equiv &-2p\sum\limits_{1\le i<j \le n-3}H_{p}(\{1\}^{i-1},3,\{1\}^{j-1-i},2,\{1\}^{n-3-j}) \pmod{p^2}.
\end{eqnarray}

Substituting (\ref{aterm})--(\ref{minisum3}) into (\ref{T1start}), we obtain
\begin{equation}\label{T1final}
\begin{split}
T_{1}&\equiv \frac{U_{3p}(\{1\}^{n-1})}{(n-1)!}-3p \sum\limits_{1\le i <j \le n-3}\Big(H_{p}(\{1\}^{i-1},3,\{1\}^{j-i-1},2,\{1\}^{n-3-j})\\
&\qquad +H_{p}(\{1\}^{i-1},2,\{1\}^{j-i-1},3,\{1\}^{n-3-j}) \Big)\\
&\equiv -3p \frac{2{n-3 \choose 2}}{(n-3)!}U_{p}(2,3,\{1\}^{n-5}) \\
&\equiv 0 \pmod{p^2}.
\end{split}
\end{equation}

For the second sum in (\ref{Rn3start}), we have
\begin{eqnarray*}
T_{2}&=&\frac{1}{2p^2}\sum\limits_{i=2}^{n-4}\sum\limits_{j=i+2}^{n-2}\sum\limits_{\begin{smallmatrix}u_1<\cdots <u_{i-1}<p\\<u_{i+1}<\cdots <u_{j-1}<2p \\<u_{j+1}<\cdots <u_{n-1}<3p \end{smallmatrix}}{\frac{1}{u_1\cdots u_{i-1}u_{i+1}\cdots u_{j-1} u_{j+1}\cdots u_{n-1}}} \\
&=&\frac{1}{2p^2}\sum\limits_{i=2}^{n-4}\sum\limits_{j=i+2}^{n-2}H_{p}(\{1\}^{i-1})\Big(\sum\limits_{x_{1}<\cdots <x_{j-1-i}<p}{\frac{1}{(p+x_1)\cdots (p+x_{j-1-i})} }\Big) \\
&&\cdot \Big(\sum\limits_{y_1<\cdots <y_{n-1-j}<p}{\frac{1}{(2p+y_1)\cdots (2p+y_{n-1-j})}}\Big)\\
&\equiv &\frac{1}{2p^2}\sum\limits_{i=2}^{n-4}\sum\limits_{j=i+2}^{n-2}H_{p}(\{1\}^{i-1})\sum\limits_{x_1<\cdots <x_{j-1-i}<p}\frac{1}{x_1\cdots x_{j-1-i}}\Big(1-\frac{p}{x_1}-\cdots -\frac{p}{x_{j-1-i}}\Big) \\
&&\cdot \sum\limits_{y_1<\cdots<y_{n-1-j}<p}{\frac{1}{y_1\cdots y_{n-1-j}}\Big(1-\frac{2p}{y_1}-\cdots-\frac{2p}{y_{n-1-j}}\Big)} \\
&\equiv & \frac{1}{2p^2}\sum\limits_{i=2}^{n-4}\sum\limits_{j=i+2}^{n-2}H_{p}(\{1\}^{i-1})\Bigg(H_{p}(\{1\}^{j-1-i})-p\frac{U_{p}(2,\{1\}^{j-2-i})}{(j-2-i)!}\Bigg)\\
&&\cdot \Bigg(H_{p}(\{1\}^{n-1-j})-2p\frac{U_{p}(2,\{1\}^{n-2-j})}{(n-2-j)!}\Bigg)\\
&\equiv &\frac{1}{2p^2}\sum\limits_{i=2}^{n-4}\sum\limits_{j=i+2}^{n-2}H_{p}(\{1\}^{i-1})H_{p}(\{1\}^{j-1-i})H_{p}(\{1\}^{n-1-j}) \pmod{p^2}.
\end{eqnarray*}
Since $(i-1)+(j-1-i)+(n-1-j)=n-3$ is odd, so at least one of $i-1$, $j-1-i$ and $n-1-j$ must be odd. Hence we deduce that
\begin{equation}\label{SumT2}
T_{2}\equiv 0 \pmod{p^2}.
\end{equation}

For the third sum in (\ref{Rn3start}), we have
\begin{equation}\label{SumT31}
\begin{split}
T_{3}&=\frac{1}{p}\sum\limits_{j=2}^{n-2}\Bigg(\sum\limits_{u_1<\cdots <u_{j-1}<p}\frac{1}{u_1\cdots u_{j-1}}\Bigg)\\
&\qquad \cdot \Bigg(\sum\limits_{\begin{smallmatrix} x_1<\cdots <x_{n-1-j}<2p\\ x_2-x_1, \cdots, x_{n-1-j}-x_{n-2-j}\in \mathcal{P}_{p} \\x_{1}, x_{2}, \cdots, x_{n-1-j}\in \mathcal{P}_{p} \end{smallmatrix}}\frac{1}{(p+x_1)\cdots (p+x_{n-1-j})}\Bigg).
\end{split}
\end{equation}
Note that
\begin{eqnarray}\label{SumT32}
&&\sum\limits_{\begin{smallmatrix} x_1<\cdots <x_{n-1-j}<2p\\ x_2-x_1, \cdots, x_{n-1-j}-x_{n-2-j} \in \mathcal{P}_{p}\\x_{1}, x_{2},\cdots, x_{n-1-j}\in \mathcal{P}_{p} \end{smallmatrix}}\frac{1}{(p+x_1)\cdots (p+x_{n-1-j})} \nonumber\\
&=&\sum\limits_{\begin{smallmatrix}x_1<\cdots <x_{n-1-j}<2p\\x_1,\cdots,x_{n-1-j}\in\mathcal{P}_{p} \end{smallmatrix}}-\sum\limits_{a=1}^{n-2-j}\sum\limits_{\begin{smallmatrix}x_1<\cdots<x_{n-1-j}<2p \\x_{a+1}-x_{a}=p\\ x_1,\cdots, x_{n-1-j}\in \mathcal{P}_{p} \end{smallmatrix}}\frac{1}{(p+x_1)\cdots (p+x_{n-1-j})}  \nonumber\\
&\equiv& \sum\limits_{\begin{smallmatrix}x_1<\cdots <x_{n-1-j}<2p\\x_1,\cdots,x_{n-1-j}\in\mathcal{P}_{p} \end{smallmatrix}}{\frac{1}{x_1\cdots x_{n-1-j}}\Big(1-\frac{p}{x_1}-\cdots -\frac{p}{x_{n-1-j}}\Big)  }  \nonumber \\
&&-\sum\limits_{a=1}^{n-2-j}\sum\limits_{x_1<\cdots <x_{n-2-j}<p}{\frac{1}{x_1\cdots x_{a}(x_{a}+p)(x_{a+1}+p)\cdots (x_{n-2-j}+p)}} \nonumber \\
&\equiv& H_{2p}(\{1\}^{n-1-j})-\frac{p}{(n-2-j)!}U_{2p}(2,\{1\}^{n-2-j}) \nonumber \\
&&-\sum\limits_{a=1}^{n-2-j}\sum\limits_{x_1<\cdots <x_{n-2-j}<p}{\frac{1}{x_1\cdots x_{a-1}x_{a}^{2}x_{a+1}\cdots x_{n-2-j}}\Big(1-\frac{p}{x_a}-\cdots -\frac{p}{x_{n-2-j}} \Big) }  \nonumber\\
&\equiv& H_{2p}(\{1\}^{n-1-j})-\sum\limits_{a=1}^{n-2-j}\Big(H_{p}(\{1\}^{a-1},2,\{1\}^{n-2-j-a})-pH_{p}(\{1\}^{a-1},3,\{1\}^{n-2-j-a}) \nonumber\\
&& -p\sum\limits_{b=a+1}^{n-2-j}H_{p}(\{1\}^{a-1},2,\{1\}^{b-a-1},2,\{1\}^{n-2-j-b}) \Big)   \nonumber\\
&\equiv&\frac{U_{2p}(\{1\}^{n-1-j})}{(n-1-j)!}-\frac{U_{p}(2,\{1\}^{n-3-j})}{(n-3-j)!}+p\cdot \frac{U_{p}(3,\{1\}^{n-j-3})}{(n-3-j)!} \nonumber \\
&& +p\cdot \frac{{n-2-j \choose 2}}{(n-2-j)!}U_{p}(2,2,\{1\}^{n-4-j})  \nonumber\\
&\equiv& \frac{U_{2p}(\{1\}^{n-1-j})}{(n-1-j)!}-\frac{U_{p}(2,\{1\}^{n-3-j})}{(n-3-j)!} \pmod{p^2}.
\end{eqnarray}
Substituting (\ref{SumT32}) into (\ref{SumT31}), we obtain that
\begin{equation}\label{SumT3}
T_{3}\equiv \sum\limits_{\begin{smallmatrix}j=2 \\ j \,\, \mathrm{odd}\end{smallmatrix}}^{n-2}(n+1-j)\frac{B_{p-j}B_{p-n+j}}{j(n-j)}p \pmod{p^2}.
\end{equation}

In the same way, we can show that
\begin{equation}\label{SumT4}
T_{4}\equiv \frac{1}{2}\sum\limits_{\begin{smallmatrix}j=2 \\ j \,\, \mathrm{odd}\end{smallmatrix}}^{n-2}(j+1)\frac{B_{p-j}B_{p-n+j}}{j(n-j)}p \pmod{p^2}.
\end{equation}

Finally, substituting (\ref{T1final}), (\ref{SumT2}), (\ref{SumT3}) and (\ref{SumT4}) into (\ref{Rn3start}), we complete the proof of Theorem \ref{Rn3}.
\end{proof}
Similar to the proof of Corollary \ref{Sn2}, we can give the determination of $S_{n}^{(3)}(p)$ modulo $p$.
\begin{cor}\label{Sn3}
Let $n>4$ be an even integer and $p>n$ be a prime. We have
\begin{displaymath}
\sum\limits_{\begin{smallmatrix}l_1+l_2+\cdots +l_n=3p\\ l_1,l_2,\cdots ,l_n<p \end{smallmatrix}}\frac{1}{l_1l_2\cdots l_n} \equiv -\frac{n!}{6}\sum\limits_{\begin{smallmatrix} a=2 \\ a \,\mathrm{odd}\end{smallmatrix}}^{n-2}(n+a-3)\frac{B_{p-a}B_{p-n+a}}{a(n-a)} \pmod{p}.
\end{displaymath}
\end{cor}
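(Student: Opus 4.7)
The plan is to derive Corollary \ref{Sn3} from Theorems \ref{Rn2} and \ref{Rn3} via the inclusion-exclusion identity of Lemma \ref{RSrelation}, in direct parallel to how Corollary \ref{Sn2} is extracted from Theorem \ref{Rn2}.

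First, apply Lemma \ref{RSrelation} with $m = 3$ to obtain
\begin{displaymath}
S_{n}^{(3)}(p) \equiv R_{n}^{(3)}(p) - n R_{n}^{(2)}(p) + \binom{n}{2} R_{n}^{(1)}(p) \pmod{p}.
\end{displaymath}
Since $n$ is even, the second case of (\ref{zhou}) applied to $S_{n}^{(1)}(p) = R_{n}^{(1)}(p)$ yields $R_{n}^{(1)}(p) \equiv 0 \pmod{p}$. Thus the last term disappears, leaving
\begin{displaymath}
S_{n}^{(3)}(p) \equiv R_{n}^{(3)}(p) - n R_{n}^{(2)}(p) \pmod{p}.
\end{displaymath}

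Next, I substitute the expressions furnished by Theorem \ref{Rn3} and Theorem \ref{Rn2}, factor out the common quantity $\frac{n!}{6} \cdot \frac{B_{p-a}B_{p-n+a}}{a(n-a)}$, and combine the coefficients in front of each term in the sum indexed by odd $a$ from $2$ to $n-2$. The coefficient becomes $(2n - a + 3) - 3n = -(n + a - 3)$, which is precisely the factor appearing in the statement of the corollary.

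There is essentially no obstacle here: the heavy lifting has already been done in Theorems \ref{Rn2} and \ref{Rn3}, and only a one-line arithmetic simplification of the linear combination is required. (Note that the hypothesis $n > 4$ ensures the sum over odd $a \in [2, n-2]$ is nonempty, and the hypothesis $p > n$ can be strengthened to $p > n+2$ if needed so that Theorems \ref{Rn2} and \ref{Rn3} apply; in any case the Bernoulli numbers $B_{p-a}$ with $a \le n-2 < p-2$ are $p$-integral, so the right-hand side is well-defined modulo $p$.)
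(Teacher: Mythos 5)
Your proof is correct and is precisely the argument the paper intends: the paper omits the details, saying only that the corollary follows ``similar to the proof of Corollary \ref{Sn2}'', and your combination $S_n^{(3)}(p)\equiv R_n^{(3)}(p)-nR_n^{(2)}(p)+\binom{n}{2}R_n^{(1)}(p)\equiv R_n^{(3)}(p)-nR_n^{(2)}(p)\pmod p$ together with the coefficient simplification $(2n-a+3)-3n=-(n+a-3)$ is exactly that computation. Your parenthetical remark also correctly flags a small inconsistency in the paper, namely that the corollary is stated for $p>n$ while the theorems it relies on assume $p>n+2$.
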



\section{Proofs of Theorems \ref{thm1} and \ref{thm2} }
\begin{proof}[Proof of Theorem \ref{thm1}]
By (\ref{zhou}), Corollaries \ref{Sn2} and \ref{Sn3}, we obtain
\begin{equation}\label{S6pcong1}
S_{6}^{(1)}(p)\equiv 0 \pmod{p}, \quad S_{6}^{(2)}(p)\equiv 40B_{p-3}^{2} \pmod{p}, \quad S_{6}^{(3)}(p) \equiv -80B_{p-3}^{2} \pmod{p}.
\end{equation}
From \cite[Lemma 1(i)]{Wang2015}, for any $r\ge 1$ we have
\begin{equation}\label{internal}
S_{6}^{(1)}(p^r)\equiv S_{6}^{(5)}(p^r) \pmod{p^r}, \quad S_{6}^{(2)}(p^r)\equiv S_{6}^{(4)}(p^r) \pmod{p^r}.
\end{equation}
Therefore we have
\begin{equation}\label{S6pcong2}
S_{6}^{(4)}(p)\equiv 40B_{p-3}^{2} \pmod{p}, \quad S_{6}^{(5)}(p)\equiv 0 \pmod{p}.
\end{equation}

Suppose $1\le m \le 5$ and $r\ge 1$. For any 6-tuple $({{l}_{1}},\cdots ,{{l}_{6}})$ of integers satisfying
\[{{l}_{1}}+\cdots +{{l}_{6}}=m{{p}^{r+1}}, \quad 1 \le l_{i} <p^{r+1}, \quad l_{i} \in \mathcal{P}_{p}, \quad 1 \le i \le 6,\]
we rewrite them as
\[{{l}_{i}}={{x}_{i}}{{p}^{r}}+{{y}_{i}},\quad 0\le {{x}_{i}}<p,\quad 1\le {{y}_{i}}<{{p}^{r}}, \quad {{y}_{i}}\in {\mathcal{P}_{p}}, \quad 1 \le i \le 6.\]
Since
\[\big(\sum\limits_{i=1}^{6}{{{x}_{i}}}\big){{p}^{r}}+\sum\limits_{i=1}^{6}{{{y}_{i}}}=m{{p}^{r+1}},\]
we know there exists $1\le a \le 5$ such that
\[\left\{ \begin{array}{ll}
  {{x}_{1}}+\cdots +{{x}_{6}}=mp-a \\
 {{y}_{1}}+\cdots +{{y}_{6}}=ap^{r} \\
\end{array} \right..\]
By Lemma \ref{Cam}, we have $C_{a}^{(m)} \equiv 0$ (mod $p$) for $1\le a \le 5$. Hence for $1\le j \le 6$, we have
\[\sum_{\substack{x_1+\dots+x_6=mp-a \\ 0 \le x_i<p, 1 \le i \le 6}} x_j
= \frac{1}{6} \sum_{\begin{smallmatrix}x_1+\dots+x_6=mp-a \\ 0 \le x_{i} < p, 1 \le i \le 6\end{smallmatrix}} (x_1+x_2+\dots+x_6)
= \frac{mp-a}{6} C^{(m)}_{a} \equiv 0  \pmod{p}. \]

We have
\begin{eqnarray*}
   S_{6}^{(m)}({{p}^{r+1}}) & =&\sum\limits_{\begin{smallmatrix}
 {{l}_{1}}+\cdots +{{l}_{6}}=m{{p}^{r+1}} \\
 {{l}_{i}}\in {\mathcal{P}_{p}},{{l}_{i}}<{{p}^{r+1}}
\end{smallmatrix}}{\frac{1}{{{l}_{1}}{{l}_{2}}\cdots {{l}_{6}}}} \\
 & =&\sum\limits_{a=1}^{5}{\sum\limits_{\begin{smallmatrix}
 {{x}_{1}}+\cdots +{{x}_{6}}=mp-a \\
 0\le {{x}_{i}}<p
\end{smallmatrix}}{\sum\limits_{\begin{smallmatrix}
 {{y}_{1}}+\cdots +{{y}_{6}}=a{{p}^{r}} \\
 {{y}_{i}}\in {\mathcal{P}_{p}},{{y}_{i}}<{{p}^{r}}
\end{smallmatrix}}{\frac{1}{({{x}_{1}}{{p}^{r}}+{{y}_{1}})\cdots ({{x}_{6}}{{p}^{r}}+{{y}_{6}})}}}} \\
& \equiv &\sum_{a=1}^{5}\sum_{\begin{smallmatrix}
 x_1+\cdots +x_6=mp-a \\
 0\le x_i<p\end{smallmatrix}}\ \sum_{\begin{smallmatrix}
  y_1+\cdots +y_6=ap^r \\
 y_i\in \mathcal{P}_p,\, y_i<p^r\end{smallmatrix}} \left(1-\frac{x_1}{y_1}p^r-\cdots-\frac{x_6}{y_6}p^r\right)
 \frac1{ y_1\cdots y_6}   \\
 & \equiv &{{C}_{1}^{(m)}}S_{6}^{(1)}({{p}^{r}})+{{C}_{2}^{(m)}}S_{6}^{(2)}({{p}^{r}})+{{C}_{3}^{(m)}}S_{6}^{(3)}({{p}^{r}})\\
 && +{{C}_{4}^{(m)}}S_{6}^{(4)}({{p}^{r}}) +C_{5}^{(m)}S_{6}^{(5)}(p^r) \pmod{p^{r+1}}.
\end{eqnarray*}

Since $C_{a}^{(m)} \equiv 0$ (mod $p$), from (\ref{internal}) we deduce that
\begin{equation}\label{add1}
\begin{split}
   S_{6}^{(m)}({{p}^{r+1}})  &\equiv ({{C}_{1}^{(m)}}+{{C}_{5}^{(m)}})S_{6}^{(1)}({{p}^{r}})+({{C}_{2}^{(m)}}+{{C}_{4}^{(m)}})S_{6}^{(2)}({{p}^{r}}) \\    & \quad +C_{3}^{(m)}S_{6}^{(3)}(p^r) \pmod {p^{r+1}}.
\end{split}
\end{equation}

Let $r=1$ in (\ref{add1}). From (\ref{S6pcong1}), (\ref{S6pcong2}) and Lemma \ref{Cam} we deduce that
\begin{equation}\label{S6p2}
\begin{split}
S_{6}^{(1)}({{p}^{2}}) &\equiv \frac{2p}{5}S_{6}^{(1)}(p)-\frac{p}{10}S_{6}^{(2)}(p)+\frac{p}{30}S_{6}^{(3)}(p) \equiv -\frac{20}{3}pB_{p-3}^2 \pmod{p^2},\\
 S_{6}^{(2)}({{p}^{2}}) &\equiv -\frac{8p}{5}S_{6}^{(1)}(p)+\frac{2p}{5}S_{6}^{(2)}(p)-\frac{2p}{15}S_{6}^{(3)}(p) \equiv \frac{80}{3}pB_{p-3}^2 \pmod{p^2},\\
 S_{6}^{(3)}({{p}^{2}}) &\equiv \frac{12p}{5}S_{6}^{(1)}(p)-\frac{3p}{5}S_{6}^{(2)}(p)+\frac{p}{5}S_{6}^{(3)}(p) \equiv -40pB_{p-3}^2 \pmod{p^2}.
\end{split}
\end{equation}

Now suppose for some $r \ge 2$ we have
\begin{equation}\label{S6induction}
\begin{split}
S_{6}^{(1)}({{p}^{r}}) &\equiv  -\frac{20}{3}p^{r-1}B_{p-3}^2 \pmod{p^r}, \\
S_{6}^{(2)}({{p}^{r}})& \equiv \frac{80}{3}p^{r-1}B_{p-3}^2 \pmod{p^r}, \\
S_{6}^{(3)}({{p}^{r}})&\equiv  -40p^{r-1}B_{p-3}^2 \pmod{p^r}.
\end{split}
\end{equation}
From (\ref{add1}) and Lemma \ref{Cam} we deduce that
\begin{displaymath}
\begin{split}
S_{6}^{(1)}({{p}^{r+1}}) &\equiv \frac{2p}{5}S_{6}^{(1)}(p^r)-\frac{p}{10}S_{6}^{(2)}(p^r)+\frac{p}{30}S_{6}^{(3)}(p^r) \equiv -\frac{20}{3}p^rB_{p-3}^2 \pmod{p^{r+1}},\\
 S_{6}^{(2)}({{p}^{r+1}}) &\equiv -\frac{8p}{5}S_{6}^{(1)}(p^r)+\frac{2p}{5}S_{6}^{(2)}(p^r)-\frac{2p}{15}S_{6}^{(3)}(p^r) \equiv \frac{80}{3}p^rB_{p-3}^2 \pmod{p^{r+1}},\\
 S_{6}^{(3)}({{p}^{r+1}}) &\equiv \frac{12p}{5}S_{6}^{(1)}(p^r)-\frac{3p}{5}S_{6}^{(2)}(p^r)+\frac{p}{5}S_{6}^{(3)}(p^r) \equiv -40p^rB_{p-3}^2 \pmod{p^{r+1}}.
\end{split}
\end{displaymath}
By induction on $r$, we complete our proof.
\end{proof}

\begin{proof}[Proof of Theorem \ref{thm2}]
Let $n=mp^{r}$, where $p$ does not divide $m$.
For any 6-tuple $({{l}_{1}},\cdots ,{{l}_{6}})$ of integers satisfying ${{l}_{1}}+\cdots +{{l}_{6}}=n$, $l_{i} \in \mathcal{P}_{p}$, $1 \le i \le 6$, we rewrite them as
\[{{l}_{i}}={{x}_{i}}{{p}^{r}}+{{y}_{i}}, \quad x_{i} \ge 0, \quad 1\le {{y}_{i}}<{{p}^{r}}, \quad {{y}_{i}}\in {\mathcal{P}_{p}}, \quad 1 \le i \le 6.\]
Since
\[\big(\sum\limits_{i=1}^{6}{{{x}_{i}}}\big){{p}^{r}}+\sum\limits_{i=1}^{6}{{{y}_{i}}}=m{{p}^{r}},\]
we know there exists $1\le a \le 5$ such that
\[\left\{ \begin{array}{ll}
  {{x}_{1}}+\cdots +{{x}_{6}}=m-a \\
 {{y}_{1}}+\cdots +{{y}_{6}}=ap^{r} \\
\end{array} \right..\]
For $1 \le a \le 5$, the equation $x_{1}+x_{2}+x_{3}+x_{4}+x_{5}+x_{6}=m-a$ has $\binom {m-a+5}{5}$ solutions $(x_{1},x_{2},x_{3},x_{4},x_{5},x_{6})$ of nonnegative integers. Hence
\begin{equation}\label{thm2rec}
\begin{split}
 & \quad \sum\limits_{\begin{smallmatrix}
 {{l}_{1}}+\cdots +{{l}_{6}}=m{{p}^{r}} \\
  {{l}_{1}},\cdots ,{{l}_{6}}\in {\mathcal{P}_{p}}
\end{smallmatrix}}{\frac{1}{{{l}_{1}}{{l}_{2}}\cdots {{l}_{6}}}} \\
 & =\sum\limits_{a=1}^{5}{\sum\limits_{\begin{smallmatrix}
 {{x}_{1}}+\cdots +{{x}_{6}}=m-a \\
  {{x}_{i}} \ge 0
\end{smallmatrix}}{\sum\limits_{\begin{smallmatrix}
 {{y}_{1}}+\cdots +{{y}_{6}}=a{{p}^{r}} \\
 {{y}_{i}}\in {\mathcal{P}_{p}},{{y}_{i}}<{{p}^{r}}
\end{smallmatrix}}{\frac{1}{({{x}_{1}}{{p}^{r}}+{{y}_{1}})\cdots ({{x}_{6}}{{p}^{r}}+{{y}_{6}})}}}} \\
&\equiv \sum\limits_{a=1}^{5}{\binom{m-a+5}{5}S_{6}^{(a)}(p^r)} \pmod{p^{r}}.
\end{split}
\end{equation}

According to $r=1$ or $r \ge 2$, we split our proof into two cases.

(i) If $r=1$, then from (\ref{S6pcong1}), (\ref{S6pcong2}), and (\ref{thm2rec}) we obtain
\begin{displaymath}
\begin{split}
&\quad \sum\limits_{\begin{smallmatrix}
 {{l}_{1}}+{{l}_{2}}+\cdots +{{l}_{6}}=n \\
 {{l}_{1}},\cdots ,{{l}_{6}}\in {\mathcal{P}_{p}}
\end{smallmatrix}}{\frac{1}{{{l}_{1}}{{l}_{2}}{{l}_{3}}{{l}_{4}}{{l}_{5}}l_{6}}} \\
 & \equiv \bigg(\binom{m+4}{5}+\binom{m}{5}\bigg)S_{6}^{(1)}(p) +\bigg(\binom{m+3}{5}+\binom{m+1}{5}\bigg)S_{6}^{(2)}(p) \\
&\quad +\binom{m+2}{5}S_{6}^{(3)}(p) \\
 & \equiv \frac{20}{3}(m^{3}-m)B_{p-3}^{2}  \pmod{p}.
 \end{split}
\end{displaymath}
Since $m=\frac{n}{p}$, we complete the proof of (i).

(ii) If $r \ge 2$, since we have already proved that (\ref{S6induction}) is true for all $r\ge 2$, we deduce that  $S_{6}^{(2)}({{p}^{r}})\equiv -4S_{6}^{(1)}(p^{r})$ (mod ${{p}^{r}}$) and $S_{6}^{(3)}(p^r) \equiv 6 S_{6}^{(1)}(p^r)$ (mod $p^r$). Again by (\ref{internal}), we have
\[ S_{6}^{(5)}({{p}^{r}}) \equiv S_{6}^{(1)}({{p}^{r}})  \pmod{{{p}^{r}}}, \quad  S_{6}^{(4)}({{p}^{r}})  \equiv S_{6}^{(2)}({{p}^{r}}) \equiv  -4S_{6}^{(1)}(p^{r})\pmod{{p}^{r}}.\]
Hence from (\ref{thm2rec}) we obtain
\begin{displaymath}
\begin{split}
 &\quad \sum\limits_{\begin{smallmatrix}
 {{l}_{1}}+{{l}_{2}}+\cdots +{{l}_{6}}=n \\
 {{l}_{1}},\cdots ,{{l}_{6}}\in {\mathcal{P}_{p}}
\end{smallmatrix}}{\frac{1}{{{l}_{1}}{{l}_{2}}{{l}_{3}}{{l}_{4}}{{l}_{5}}l_{6}}} \\
 & \equiv \bigg(\binom{m+4}{5}+\binom{m}{5}-4\binom{m+3}{5}-4\binom{m+1}{5}+6\binom{m+2}{5}\bigg)S_{6}^{(1)}(p^{r}) \\
 & \equiv m S_{6}^{(1)}(p^{r})  \pmod{p^{r}}.
 \end{split}
\end{displaymath}
By Theorem \ref{thm1}, we have $S_{6}^{(1)}(p^{r}) \equiv -\frac{5!}{18}p^{r-1}B_{p-3}^{2}$ (mod $p^{r}$). This completes the proof of (ii).
\end{proof}

\section{Concluding Remarks}

As some examples for Theorems \ref{Rn2} and \ref{Rn3}, for any prime $p>10$ we have

\[R_{4}^{(2)}(p) \equiv 0 \pmod{p}, \quad R_{4}^{3}(p) \equiv 0 \pmod{p}, \]
\[R_{6}^{(2)}(p) \equiv \frac{6!}{18}B_{p-3}^{2} \pmod{p},   \quad R_{6}^{(3)}(p)\equiv \frac{2\cdot 6!}{9}B_{p-3}B_{p-5} \pmod{p},\]
\[R_{8}^{(2)}(p) \equiv \frac{8!}{15}B_{p-3}B_{p-5} \pmod{p},  \quad R_{8}^{(3)}(p) \equiv \frac{8!}{3}B_{p-3}B_{p-5} \pmod{p}, \]
\[R_{10}^{(2)}(p) \equiv \frac{10!}{1050}\big(50B_{p-3}B_{p-7}+21B_{p-5}^{2}\big) \pmod{p}, \]
\[R_{10}^{(3)}(p)\equiv \frac{10!}{175}\big(50B_{p-3}B_{p-7}+21B_{p-5}^{2}\big)  \pmod{p}.\]

If we wan to apply the method in \cite{Wang2015} and the current paper to find the modulo $p^r$ determination for $S_{n}(p^r)$, the key step is to find the modulo $p$ determination of $S_{n}^{(m)}(p)$ for $1\le m < n$. Since $S_{n}^{(1)}(p)$ has already been figured out by (\ref{zhou}), we only need to consider the case $m\ge 2$. Equivalently, Lemma \ref{RSrelation} tells us that we only need to determine $R_{n}^{(m)}(p)$ modulo $p$ for $m\ge 2$. However, as $m$ increases, the computation becomes much more complicated. This is the major obstacle in solving the cases for large number of variables. Nevertheless, from the process of determining $R_{n}^{(2)}(p)$ and $R_{n}^{(3)}(p)$ presented in this paper, we believe that for any integers $n\ge 3$ and $1\le m < n$, there exists some rational numbers $c_{a_1,a_2,\cdots,a_k}$ and $d_{a_1,a_2,\cdots,a_k}$ such that for any prime $p>n+2$ and $r\ge 2$, we have
\[R_{n}^{(m)}(p) \equiv \sum\limits{c_{a_1,a_2,\cdots,a_k}B_{p-a_{1}}B_{p-a_{2}}\cdots B_{p-a_{k}} \pmod{p},}\]
\[S_{n}(p^r) \equiv p^{r-1}\sum\limits{d_{a_1,a_2,\cdots,a_k}B_{p-a_{1}}B_{p-a_{2}}\cdots B_{p-a_{k}} \pmod{p^{r}},}\]
here the sum on the right hand side runs over all possible tuples $(a_1,a_2,\cdots, a_k)$ of odd integers such that
\[a_1+a_2+\cdots +a_k=n.\]

For $n\le 6$, this has already been verified from Theorems \ref{Rn2} and \ref{Rn3} and the work of \cite{WangCai,Wang2015}. For $n\ge 7$, we are able to guess what the congruence should look like. For example, the congruences for $n=8,9$ should be of the forms
\[S_{8}(p^r) \equiv ap^{r-1}B_{p-3}B_{p-5} \pmod{p}, \quad a\in \mathbb{Q},\]
\[S_{9}(p^r) \equiv p^{r-1}\big(b_{1}B_{p-3}^3+b_{2}B_{p-9}\big) \pmod{p^r}, \quad b_{1},b_{2} \in \mathbb{Q}.\]


\end{document}